\newtheorem{thm}{Theorem}[section]
\newtheorem{lem}[thm]{Lemma}
\newtheorem{prop}[thm]{Proposition}
\newtheorem{conj}[thm]{Conjecture}
\theoremstyle{remark}
\newtheorem{rem}[thm]{Remark}
\theoremstyle{definition}
\theoremstyle{plain}
\newtheorem{claim}[thm]{Claim}
\newcommand{\norm}[1]{\left\Vert#1\right\Vert}
\newcommand{\abs}[1]{\left\vert#1\right\vert}
\newcommand{\set}[1]{\left\{#1\right\}}
\newcommand{\R}{\mathbb{R}}
\newcommand{\N}{\mathbb{N}}
\newcommand{\T}{\mathbb{T}}
\newcommand{\Z}{\mathbb{Z}}
\renewcommand{\dim}{\mathbf{dim}}
\newcommand{\len}{\mathbf{len}}
\newcommand{\dist}{\mathbf{dist}}
\newcommand{\proj}[2]{\mathbf{Proj}_{#1}\left(#2\right)}
\newcommand{\hdim}{\dim_\mathrm{H}}
\newcommand{\lhdim}{\hdim^\loc}
\newcommand{\loc}{\mathrm{loc}}
\newcommand{\Tr}{\mathrm{Tr}\text{\hspace{1mm}}}
\newcommand{\bdim}{\dim_\mathrm{B}}
\newcommand{\lbdim}{\bdim^\loc}
\newcommand{\mf}[1]{\mathfrak{#1}}
\newcommand{\cites}{\cite}
\begin{document}

\title{Spectral analysis of tridiagonal
Fibonacci Hamiltonians}

\author{William N. Yessen$^*$\\
Department of Mathematics\\
University of California, Irvine\\
Irvine, CA 92697\\
wyessen@math.uci.edu}

\date{\today}

\setcounter{tocdepth}{1}

\maketitle

\begin{abstract}

We consider a family of discrete Jacobi operators on the one-dimensional integer lattice, with the diagonal and the off-diagonal entries given by two sequences generated by the Fibonacci substitution on two letters. We show that the spectrum is a Cantor set of zero Lebesgue measure, and discuss its fractal structure and Hausdorff dimension. We also extend some known results on the diagonal and the off-diagonal Fibonacci Hamiltonians.

\vspace{2mm}
2010 Mathematics Subject Classification: 47B36, 82B44;

\vspace{2mm}
Keywords: spectral theory, quasiperiodicity, Jacobi operators, Fibonacci Hamiltonians, trace maps.

\vspace{5mm}
$^*$The author was supported by the NSF grant DMS-0901627, PI: A. Gorodetski and the NSF grant IIS-1018433, PI: M. Welling and Co-PI: A. Gorodetski

\end{abstract}

\tableofcontents

\section{Introduction}


Partly due to the choice of the models in the original papers \cites{Kohmoto1983, Ostlund1983}, until quite recently, the mathematical literature on the Fibonacci operators had been focused exclusively on the diagonal model (see surveys \cites{Damanik2005, Damanik2000b,Suto1995}). Recently in \cite[Appendix A]{Damanik2010} D. Damanik and A. Gorodetski, and also J. M. Dahl in \cite{Dahl2010}, investigated the off-diagonal model. This model has been the object of interest in a number of physics papers (see, for example, \cites{Mandel2008, Mandel2006, Kohmoto1987b, Rudinger1998, Velhinho2000, You1991}). 

Quasi-periodicity has also been considered, as early as 1987, in a widely studied model of magnetism: the Ising model, both quantum and classical; numerous numerical and some analytic results were obtained (see \cites{Tsunetsugu1987, Benza1989, Ceccatto1989, Hermisson1997, Doria1988, Benza1990, You1990, Tong1997} and references therein). Recently the author investigated some properties of these models in \cite{Yessen2011}. The following problem was motivated as a result of this investigation. What can be said about the spectrum and spectral type of the tridiagonal Fibonacci Hamiltonian? The aim of this paper is to investigate spectral properties of such operators.

In general one would hope to parallel the development for the diagonal and the off-diagonal cases; however, a fundamental difference presents some technical difficulties: in the application of the trace map one finds that the constant of motion (the so-called \textit{Fricke-Vogt invariant}), unlike in the diagonal and the off-diagonal cases, is not energy-independent. The main tool in the investigation of the diagonal and the off-diagonal operators has been hyperbolicity of the trace map when restricted to a constant of motion. While this technique will not apply in our case verbatim, motivated by it, and in part based on it, we employ some other tools to combat the aforementioned difficulties.

\section{The model and main results}\label{part_one}


\subsection{The model}\label{the_model}


Let $\mathcal{A} = \set{a,b}$; $\mathcal{A}^*$ denotes the set of finite words over $\mathcal{A}$. The Fibonacci substitution $S: \mathcal{A}\rightarrow\mathcal{A}^*$ is defined by $S: a\mapsto ab$, $S: b\mapsto a$. We formally extend the map $S$ to $\mathcal{A}^*$ and $\mathcal{A}^{\N,\Z}$ by
\begin{align*}
S: \alpha_1\alpha_2\cdots\alpha_k\mapsto S(\alpha_1)S(\alpha_2)\cdots S(\alpha_k)
\end{align*}
and
\begin{align*}
S: \cdots\alpha_1\alpha_2\cdots\mapsto \cdots S(\alpha_1)S(\alpha_2)\cdots.
\end{align*}
There exists a unique \textit{substitution sequence} $u\in\mathcal{A}^\N$ with the following properties \cite{Queffelec2010}:
\begin{align}\label{eq_u_prop}
&u_1\cdots u_{F_k} = S^{k-1}(a),\text{\hspace{2mm}}k\geq 2;\notag\\
&S(u) = u;\\
&u_1\cdots u_{F_{k+2}} = u_1\cdots u_{F_{k + 1}}u_1\cdots u_{F_k}\notag,
\end{align}
where $\set{F_k}_{k\in\N}$ is the sequence of Fibonacci numbers: $F_0 = F_1 = 1;\text{\hspace{2mm}}F_{k\geq 2} = F_{k-1} + F_{k-2}$.
From now on we reserve the notation $u$ for this specific sequence. 

Let $\hat{u}$ denote an arbitrary extension of $u$ to a two-sided sequence in $\mathcal{A}^\Z$. Equip $\mathcal{A}$ with the discrete topology and $\mathcal{A}^{\N,\Z}$ with the corresponding product topology. Define
\begin{align*}
\Omega = \set{\omega\in\mathcal{A}^\Z: \omega = \lim_{i\rightarrow\infty}T^{n_i}(\hat{u}), n_i\uparrow\infty},
\end{align*}
where $T: \mathcal{A}^\Z\rightarrow\mathcal{A}^\Z$ is the left shift: for $v\in\mathcal{A}^\Z,\text{\hspace{2mm}}[T(v)]_n = v_{n+1}$. The \textit{hull} $\Omega$ is compact and $T$-invariant, and $T$ is continuous.
Now to each $\omega\in\Omega$ we associate a Jacobi operator.

For every $\omega\in\Omega$, we define the \textit{Fibonacci Jacobi operator} or \textit{tridiagonal Fibonacci Hamiltonian}, $H_\omega$, on $l^2(\Z)$ as follows. Let $p, q: \mathcal{A}\rightarrow\R$. We allow only nonzero values for $p$.
\begin{align}\label{eq_models}
(H_\omega\phi)_n = p(\omega_{n})\phi_{n-1} + p(\omega_{n+1})\phi_{n+1} + q(\omega_n)\phi_n.
\end{align}
When $q\equiv 0$, we call $H$ the \textit{diagonal model} and when $p\equiv 0$, we call $H$ the \textit{off-diagonal model}. Clearly these two models are special cases of the tridiagonal Hamiltonian.

We single out a special $\omega_s\in\Omega$, defined as follows. Notice that $ba$ occurs in $u$ and that $S^2(a) = aba$ begins with $a$ and $S^2(b) = ab$ ends with $b$. Thus, iterating $S^2$ on $b|a$, where $|$ denotes the origin, we obtain as a limit a two-sided infinite sequence $\omega_s$ in $\Omega$.
%
%
The sequence $\omega_s$ has the following properties.
\begin{align}\label{eq_extended_u}
\text{\hspace{2mm}}[\omega_s]_{k\geq 1} = u_k;\text{\hspace{2mm}}[\omega_s]_{-k}=u_{k-1}\text{\hspace{2mm}for all\hspace{2mm}}k\geq 2.
\end{align}

\subsection{Main results}

From now on the spectrum of an operator $H$ will be denoted by $\sigma(H)$. The operators in \eqref{eq_models} can be first scaled by $p(a)$ and then shifted by $-q(a)/p(a)$ while preserving the spectrum. So without loss of generality, we may assume that $p(a) = 1$ and $q(a) = 0$. We represent $p, q$ in compact vector notation $(\mf{p},\mf{q})$, where $p(b) = \mf{p}$ and $q(b) = \mf{q}$.
\begin{thm}\label{thm_first_result}
There exists $\Sigma_{(\mf{p}, \mf{q})}\subset\R$, such that for all $\omega\in\Omega$, $\sigma(H_\omega) = \Sigma_{(\mf{p}, \mf{q})}$. If $(\mf{p},\mf{q})\neq (1,0)$, then $\Sigma_{(\mf{p}, \mf{q})}$ is a Cantor set of zero Lebesgue measure; it is purely singular continuous.
\end{thm}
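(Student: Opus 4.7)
The plan is to decompose the theorem into four tasks: $\omega$-independence of the spectrum, Cantor structure, zero Lebesgue measure, and identification of the spectral type, handling them in that order.

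First, I would establish $\Sigma_{(\mf{p},\mf{q})}$ as a well-defined $\omega$-independent set. Since the Fibonacci substitution is primitive, the subshift $(\Omega, T)$ is minimal, so for any $\omega,\omega'\in\Omega$ there exist $n_k\to\infty$ with $T^{n_k}\omega\to\omega'$ in $\Omega$. The shift is a unitary conjugacy: $H_{T^{n_k}\omega}$ and $H_\omega$ have identical spectrum. Strong operator (hence strong resolvent) convergence $H_{T^{n_k}\omega}\to H_{\omega'}$ combined with upper semicontinuity of the spectrum yields $\sigma(H_{\omega'})\subseteq\sigma(H_\omega)$, and the reverse inclusion follows by symmetry.

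Second, I would set up the trace map machinery. Define $(2\times 2)$ transfer matrices $T_n(E,\omega)$ and the palindromic products $M_k(E)=T_{F_k}\cdots T_1$ over the initial Fibonacci word. The substitution identity $S^{k+1}(a)=S^k(a)S^{k-1}(a)$ gives the matrix recursion $M_{k+1}=M_{k-1}M_k$; taking half-traces $x_k=\tfrac12\Tr M_k$ yields the cubic recursion $x_{k+1}=2x_kx_{k-1}-x_{k-2}$. In the pure diagonal or off-diagonal cases, the Fricke--Vogt-type quantity $I(x_{k-1},x_k,x_{k+1})$ is preserved \emph{and} independent of $E$, which pins orbits to a single non-compact surface where uniformly hyperbolic dynamics can be exploited. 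In the tridiagonal case the invariant $I=I(E)$ depends on $E$ through $p(b)$ and $q(b)$; this is the central obstacle. The remedy is to treat the trace map as a parametric family and show that $I(E)$ depends continuously (indeed analytically) on $E$ and is bounded away from the critical value of the Fricke--Vogt surface uniformly on compact subsets of a large interval, so that partial hyperbolicity persists under perturbation.

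Third, I would characterize $\Sigma_{(\mf{p},\mf{q})}$ dynamically via the standard Sütő-type equivalence $E\in\Sigma\iff$ the sequence $\{x_k(E)\}$ is bounded, and then exploit the uniform partial hyperbolicity of the previous step to derive both the zero Lebesgue measure and the Cantor structure. Concretely, the set of $E$ whose trace orbit stays in a prescribed compact region sits inside the intersection over $k$ of preimages of balls under an expanding map; standard bounded-distortion together with the exponential escape rate will give Lebesgue-null intersections. Nowhere density of $\Sigma$ and absence of isolated points follow from the same transversality/monotonicity arguments used in the off-diagonal model, transplanted to each $E$-slice. The case $(\mf{p},\mf{q})=(1,0)$ must be excluded because then $H_\omega$ is just the free Laplacian, whose spectrum is the interval $[-2,2]$.

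Finally, I would rule out absolutely continuous and pure point components. Absence of absolutely continuous spectrum is automatic by Kotani theory: the coefficient sequences take finitely many values and are aperiodic, so Kotani's theorem forces $\sigma_{\mathrm{ac}}=\emptyset$. Absence of pure point spectrum follows from a Gordon-type argument based on the strong cube-repetition property of $u$ (three near-consecutive copies of the same finite word appear at positions accumulating both sides of any origin in $\omega_s$, and by minimality in every $\omega\in\Omega$): this yields Gordon estimates on the transfer matrices that preclude exponentially decaying eigenfunctions for any candidate eigenvalue. The main technical obstacle throughout is therefore the $E$-dependence of the Fricke--Vogt invariant; once parametric hyperbolicity is established uniformly, the remaining spectral analysis parallels the diagonal and off-diagonal theories.
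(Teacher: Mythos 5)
Your decomposition is sensible, and the $\omega$-independence step (primitivity $\Rightarrow$ minimality, then shift covariance plus strong-resolvent approximation) matches the paper's. For the zero Lebesgue measure and Cantor conclusions, however, you take a genuinely different --- and substantially harder --- route than the paper does. The paper does \emph{not} extract zero measure from bounded distortion or exponential escape rates on an expanding family. It first identifies $\Sigma_{(\mf{p},\mf{q})}$ with the bounded-orbit set $B_\infty$ of the trace map: one inclusion comes from strong approximation by periodic truncations, the other from a S\"ut\H{o}-type argument, which in the Jacobi setting additionally requires that the off-diagonal coefficients $p_{k,n}$ be uniformly bounded away from $0$ and $\infty$. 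It then observes $B_\infty\subseteq\mathcal{Z}$, the set of energies with vanishing Lyapunov exponent, and invokes Kotani theory for aperiodic finitely-valued Jacobi operators to conclude that $\mathcal{Z}$ is Lebesgue-null. This bypasses all hyperbolicity, which the paper deliberately defers to the later dimension theorems where it is genuinely needed. Your bounded-distortion proposal would require establishing uniform hyperbolic estimates and transversality of the line $\gamma$ to the centre-stable lamination \emph{before} proving this theorem, whereas the paper establishes transversality only partially and only later; this is a real gap in your plan. Once zero measure and singular continuity are in hand, the Cantor structure is automatic (compact, zero measure hence no interior, no isolated points), so your appeal to ``transversality/monotonicity arguments'' for nowhere density is unnecessary overkill here.

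On the spectral type: excluding absolutely continuous spectrum via Kotani (or, even more simply, from zero Lebesgue measure of the spectrum) is fine. Your Gordon step is imprecise: the Fibonacci word does not supply three full consecutive copies of a block positioned adjacent to the origin in the way a naive three-block Gordon criterion would require, and ``positions accumulating on both sides of the origin'' is not the condition Gordon's lemma needs. The arguments that actually work are S\"ut\H{o}'s two-block Gordon criterion combined with boundedness of the traces on $B_\infty$, or a palindrome argument; both are contained in the off-diagonal treatment the paper simply cites, noting that it carries over without modification. Your observation that $(\mf{p},\mf{q})=(1,0)$ gives the free Laplacian with spectrum $[-2,2]$ correctly explains the excluded case.
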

\begin{rem} By a Cantor set we mean a (nonempty) compact totally disconnected set with no isolated points.
\end{rem}
%
%
%
%
%
\begin{figure}[t]
\centering
\includegraphics[scale=.5]{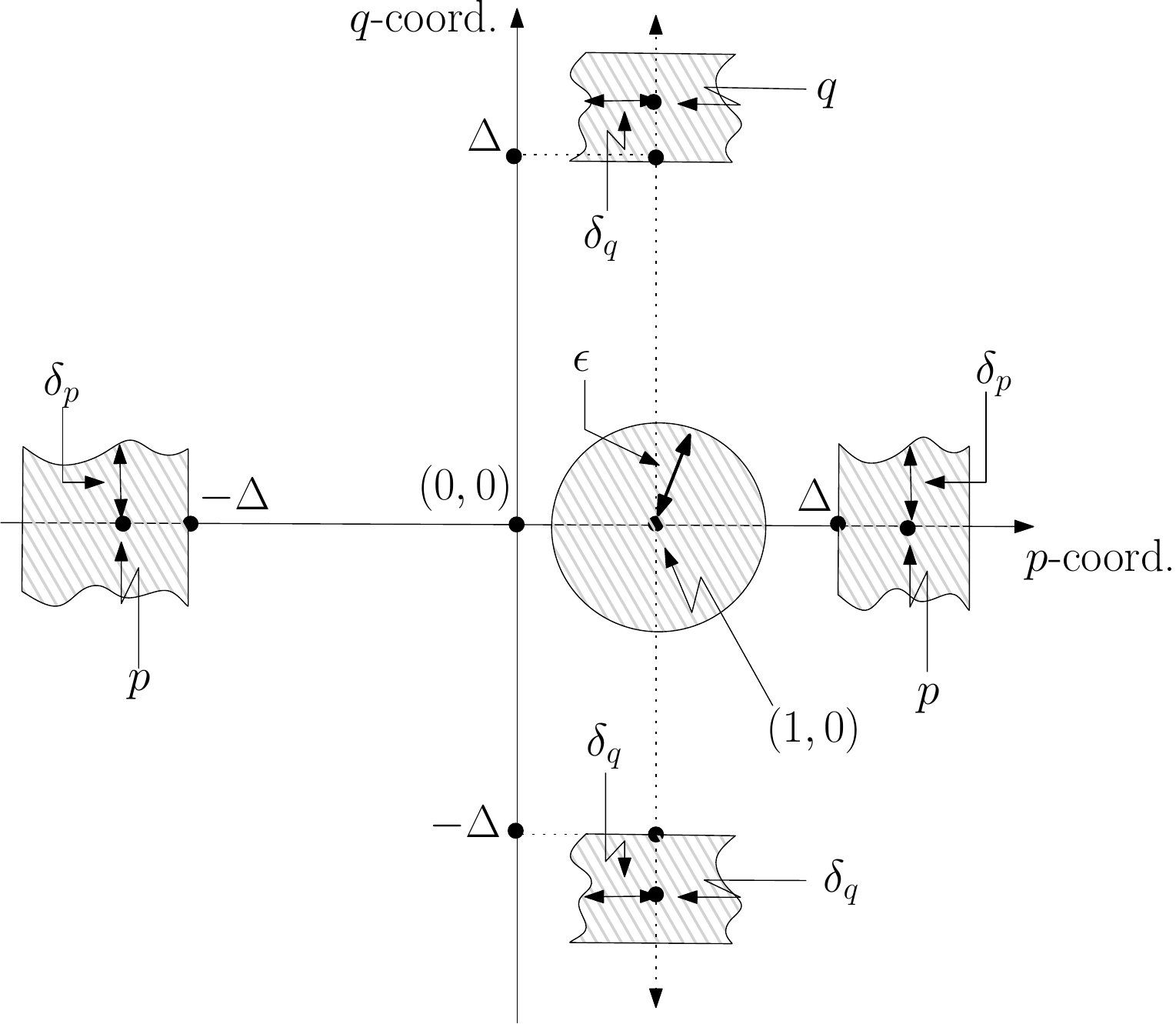}
\caption{}
\label{fig:box-region}
\end{figure}
We write simply $H$ for $H_{\omega_s}$. In what follows, the Hausdorff dimension of $A\subset\R$ is denoted by $\hdim(A)$. The local Hausdorff dimension of $A$ at $a\in A$ is defined as
\begin{align*}
\lhdim(A, a) := \lim_{\epsilon\rightarrow 0}\hdim(A\cap (a - \epsilon, a + \epsilon)).
\end{align*}
We denote by $\bdim(A)$ the box-counting dimension of $A$, and define $\lbdim(A)$ similarly to $\lhdim(A)$.

Our next results is the following theorem that describes  the fractal structure of the spectrum. 

\begin{thm}\label{thm_main_result1} For all $(\mf{p},\mf{q})\neq (1,0)$, the spectrum $\Sigma_{(\mf{p},\mf{q})}$ is a multifractal; more precisely, the following holds.

\begin{enumerate}[i.]
\item $\lhdim(\Sigma_{(\mf{p},\mf{q})}, a)$, as a function of $a\in \Sigma_{(\mf{p},\mf{q})}$, is continuous; It is constant in the diagonal and the off-diagonal cases, and nonconstant otherwise;
\item There exists nonempty $\mf{N}\subset\R^2$ of Lebesgue measure zero, such that the following holds.
\begin{enumerate}[(a)]
\item For all $(\mf{p},\mf{q})\notin \mf{N}$, we have $0<\lhdim(\Sigma_{(\mf{p},\mf{q})}, a)<1$ for all $a\in\Sigma_{(\mf{p},\mf{q})}$; hence we have $0<\hdim(\Sigma_{(\mf{p},\mf{q})})<1$;
\item for $(\mf{p},\mf{q})\in \mf{N}$, $0 < \lhdim(\Sigma_{(\mf{p},\mf{q})}, a) < 1$ for all $a\in \Sigma_{(\mf{p},\mf{q})}$ away from the lower and upper boundary points of the spectrum, and $\hdim(\Sigma_{(\mf{p},\mf{q})}) = 1$. In fact, the dimension accumulates at one of the two ends of the spectrum.
\end{enumerate}
\item $\lim_{(\mf{p},\mf{q})\rightarrow(1,0)}{\hdim(\Sigma_{(\mf{p},\mf{q})})} = 1$. In fact, the Hausdorff dimension of the spectrum is a continuous function of the parameters;
\item $\hdim(\Sigma_{(\mf{p}, 0)})$ and $\hdim(\Sigma_{(1, \mf{q})})$ depend analytically on $\mf{p}$ and $\mf{q}$, respectively;
\end{enumerate}
\end{thm}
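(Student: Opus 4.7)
The plan is to push the Fibonacci trace-map framework through to the tridiagonal case. Let $M_n(E)$ denote the transfer matrix over the first $F_n$ sites and set $x_n(E) = \tfrac{1}{2}\mathrm{tr}\, M_n(E)$. The Fibonacci recursion gives a polynomial map $T \colon \R^3 \to \R^3$, $T(x,y,z) = (2xy-z,\,x,\,y)$, preserving the family of surfaces $S_I = \set{(x,y,z): x^2+y^2+z^2 - 2xyz - 1 = I}$. The spectrum $\Sigma_{(\mf{p},\mf{q})}$ is precisely the set of $E$ for which the forward orbit of $(x_1,x_2,x_3)(E)$ under $T$ is bounded, and this orbit lies on $S_{I(E)}$, where $I(E)$ is the (now $E$-dependent) Fricke--Vogt invariant. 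The first major step is to show that for every $E\in\Sigma_{(\mf{p},\mf{q})}$, the restriction $T|_{S_{I(E)}}$ is hyperbolic on its non-wandering set $\Lambda_{I(E)}$, and that the energy curve $E\mapsto(x_1,x_2,x_3)(E)$ is transverse to the stable lamination. This is what I expect to be the principal obstacle: one cannot work on a single fixed invariant surface as in the diagonal and off-diagonal cases, so hyperbolicity and transversality must be transported across a varying one-parameter family of level surfaces. Once it is in hand, a neighborhood of $E$ in $\Sigma_{(\mf{p},\mf{q})}$ is bi-Lipschitz to a transversal slice of $\Lambda_{I(E)}$, and Bowen's formula gives $\lhdim(\Sigma_{(\mf{p},\mf{q})}, E) = t(E)$ where $t(E)$ is the unique zero of $t\mapsto P\bigl(-t\log\norm{DT|_{E^u}}\bigr)$ on $\Lambda_{I(E)}$.

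Part (i) now follows from joint continuity of the topological pressure: as $E$ varies, $I(E)$ varies continuously, $\Lambda_{I(E)}$ varies continuously in the Hausdorff metric, and the geometric potential varies continuously, so the pressure zero $t(E)$ is continuous in $E$. In the diagonal case $I \equiv \mf{q}^2/4$ and in the off-diagonal case $I$ is a constant depending only on $\mf{p}$, so the surface is fixed and $t$ is the same for every $E\in\Sigma_{(\mf{p},\mf{q})}$. In the genuinely tridiagonal case $I(E)$ is a non-constant polynomial in $E$, and strict monotonicity of the pressure zero in the parameter $I$ yields two energies with different local dimensions, proving non-constancy.

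For part (ii), I would let $\mf{N}$ be the parameter locus where $I(E_\pm)=0$ at one of the two spectral edges. This is a real-analytic subset of $\R^2$, hence of Lebesgue measure zero. Off $\mf{N}$, uniform hyperbolicity of $T|_{S_{I(E)}}$ across all $E\in\Sigma_{(\mf{p},\mf{q})}$ and compactness give $0 < \lhdim(\Sigma_{(\mf{p},\mf{q})}, E) < 1$ for every $E$, hence $0<\hdim(\Sigma_{(\mf{p},\mf{q})})<1$. On $\mf{N}$, the periodic orbit of $T$ corresponding to the relevant edge lands on $S_0$, where stable and unstable directions coincide and uniform hyperbolicity fails; a neutral-fixed-point thermodynamic argument then forces $t(E)\nearrow 1$ as $E$ approaches that edge, so $\hdim(\Sigma_{(\mf{p},\mf{q})})=1$, with the dimension genuinely accumulating at that end while staying strictly below one at interior energies.

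For (iii) and (iv): at $(\mf{p},\mf{q})=(1,0)$ the operator is the free Laplacian with spectrum $[-2,2]$ of dimension one; continuity of the pressure zero under smooth deformation of the parametrized hyperbolic family $\set{\Lambda_{I(E)}}$ yields $\hdim(\Sigma_{(\mf{p},\mf{q})})\to 1$ as $(\mf{p},\mf{q})\to(1,0)$, and the same continuity argument at any other parameter pair gives the claimed continuity in $(\mf{p},\mf{q})$, establishing (iii). For (iv), in the one-parameter subfamilies $(\mf{p},0)$ and $(1,\mf{q})$, $I$ is $E$-independent and $T$ restricts to a single uniformly hyperbolic real-analytic diffeomorphism on a fixed surface $S_I$, with $I$ depending analytically on the remaining parameter. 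Ruelle's theorem on analyticity of topological pressure for Axiom A systems with real-analytic potentials, together with the implicit function theorem applied to $t\mapsto P(t\varphi)$ (whose derivative at the zero is strictly negative), then yields analyticity of $t$, hence of $\hdim$, in the parameter.
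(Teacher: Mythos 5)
Your overall architecture matches the paper's: trace map, $E$-dependent Fricke--Vogt invariant, hyperbolic non-wandering sets $\Omega_V$ on the surfaces $S_V$, and the local dimension formula $\lhdim(\Sigma_{(\mf{p},\mf{q})}, E) = \tfrac{1}{2}\hdim(\Omega_{V(E)})$ at points of transversal intersection with the center-stable lamination. The reductions of (ii), (iii), (iv) to properties of $V\mapsto\hdim(\Omega_V)$ (strict bounds, limit $2$ as $V\to 0^+$, analyticity) are also the paper's reductions.

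There is, however, a genuine gap exactly where you flag the ``principal obstacle.'' You propose to first \emph{establish} that the energy curve $\gamma^*_{(\mf{p},\mf{q})}$ is transverse to the stable lamination at every point of intersection, and then say ``once it is in hand'' the local dimension formula follows. But transversality of $\gamma^*_{(\mf{p},\mf{q})}$ to the center-stable manifolds for all $(\mf{p},\mf{q})$ is not known---the paper explicitly leaves it as a conjecture, and proves it only in special parameter regimes (for Theorem \ref{thm_main1}, not for Theorem \ref{thm_main_result1}). The paper's proof of Theorem \ref{thm_main_result1}-(i) deliberately does \emph{not} assume transversality everywhere. Instead it observes: (a) the curves $\vartheta(n)$ cut out by the analytic center-stable lamination on the plane $\Pi_m$ depend continuously on $n$, so Bedford--Smillie's lemma on continuous families of analytic curves forces any tangencies with $\overline{\gamma^*}$ to be isolated, hence finite by compactness; (b) at transversal points the dimension formula $\lhdim = \tfrac{1}{2}\hdim(\Omega_V)$ holds; (c) since $V\mapsto\hdim(\Omega_V)$ is continuous (indeed analytic by Cantat), the formula extends by continuity across the finitely many possible tangencies. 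This isolation-of-tangencies device is the key idea your proposal is missing, and without it the later parts (ii)--(iv) also lack a foundation, since they all rest on the dimension formula being available at \emph{every} spectral energy. A secondary gap: the theorem asserts $\mf{N}\neq\emptyset$, which you do not address; the paper proves it by a tangent-space computation showing $T_{P_1}W^{\mathrm{ss}}$ is transversal to the plane $\{z=1\}$, so $W^{\mathrm{ss}}$ actually meets some $\gamma^*_{(\mf{p},\mf{q})}$.
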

\begin{rem} We conjecture a stronger result in Section \ref{conclusion}. We also mention that \textit{ii-(a)} and \textit{iv} are extensions of results on the diagonal and the off-diagonal model; indeed, previous results relied on transversality arguments (see below), but transversality is still not known for some values of parameters $\mf{p}$ and $\mf{q}$ (see Section \ref{conclusion}). Notice also that unlike in the previously considered diagonal and off-diagonal models, in the tridiagonal model the spectrum may have full Hausdorff dimension even in the non-pure regime (i.e., $(\mf{p},\mf{q})\neq (1,0)$).
\end{rem}

Existence of box-counting dimension and, if it exists, whether it coincides with the Hausdorff dimension, is of interest. The next theorem provides a partial answer in this direction. Indeed, we prove that for all parameters $(\mf{p},\mf{q})$ in a certain region in $\R^2$ (the shaded regions in Figure \ref{fig:box-region}), the box-counting dimension of $\Sigma_{(\mf{p},\mf{q})}$ exists and coincides with the Hausdorff dimension (see, however, Section \ref{conclusion}). 

\begin{thm}\label{thm_main1}
The following statements hold.

\begin{enumerate}[i.]
\item There exists $\epsilon > 0$ such that for all $(\mf{p},\mf{q})$ satisfying $\norm{(1,0) - (\mf{p},\mf{q})}<\epsilon$, the box-counting dimension of $\Sigma_{(\mf{p},\mf{q})}$ exists and coincides with the Hausdorff dimension;

\item There exists $\Delta > 0$, such that for all $\abs{\mf{p}} \geq \Delta$ there exists $\delta_\mf{p}>0$, such that for all $\mf{q}$ satisfying $\abs{\mf{q}}<\delta_\mf{p}$, the box-counting dimension of $\Sigma_{(\mf{p},\mf{q})}$ exists and coincides with the Hausdorff dimension.

\item There exists $\Delta > 0$ such that for all $\abs{\mf{q}}\geq\Delta$ there exists $\delta_\mf{q} > 0$, such that for all $\mf{p}$ satisfying $\abs{\mf{p}}<\delta_\mf{q}$, the box-counting dimension of $\Sigma_{(\mf{p},\mf{q})}$ exists and coincides with the Hausdorff dimension.
\end{enumerate}
\end{thm}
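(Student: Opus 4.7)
The plan is to reduce the statement to known dimension results for dynamically defined Cantor sets arising from the Fibonacci trace map. Recall that for each energy $E$ the trace-map orbit of the initial condition $(x_0(E),y_0(E),z_0(E))$ lies on the level surface $S_{I(E)}$ of the Fricke-Vogt invariant $I = I_{(\mf{p},\mf{q})}(E)$, and $\Sigma_{(\mf{p},\mf{q})}$ is exactly the set of energies whose orbit remains bounded. The key fact (Casdagli in the strong-coupling regime, Damanik--Gorodetski on each nonzero level) is that when $I$ is bounded away from zero and infinity, the trace map restricted to $S_I$ is Axiom A with a uniformly hyperbolic bounded invariant set $\Lambda_I$, and when the line of initial conditions hits $\Lambda_I$ transversally to its stable foliation, the intersection with the corresponding stable lamination is a Cantor set of \emph{cookie-cutter} type for which box-counting and Hausdorff dimensions coincide.

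I would carry out the argument in three steps. First, for each of the three parameter regions, show that $I_{(\mf{p},\mf{q})}(E)$ stays in a compact subinterval of $(0,\infty)$ as $E$ varies over $\Sigma_{(\mf{p},\mf{q})}$; in part \emph{i} this is a direct expansion of $I$ around the degenerate value $I\equiv 0$ at $(1,0)$, while in parts \emph{ii} and \emph{iii} it follows from the known large-coupling asymptotics of the pure off-diagonal and pure diagonal models, combined with continuity in $(\mf{p},\mf{q})$ and smallness of the complementary parameter. Second, invoke persistence of hyperbolicity and structural stability to obtain a uniform Axiom A picture across the compact family $\{\Lambda_{I(E)}\}_{E\in\Sigma_{(\mf{p},\mf{q})}}$; verify transversality of the curve of initial conditions by noting that at the pure cases it is established in the works cited in the introduction, and in the parameter neighborhoods at hand it persists by the openness of transversality in the $C^1$ topology.

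Third, apply the standard dimension theory for uniformly hyperbolic one-dimensional Markov repellers: the slice of $\Lambda_{I(E)}$ by the unstable foliation is a conformal dynamically defined Cantor set, so its box and Hausdorff dimensions coincide and are given by the unique zero of the topological pressure (Pesin--Weiss, Falconer). Continuity of this pressure in $E$, together with the continuity of $\lhdim(\Sigma_{(\mf{p},\mf{q})},\cdot)$ from Theorem \ref{thm_main_result1}, allows one to pass from local slices to the full spectrum; a finite covering of $\Sigma_{(\mf{p},\mf{q})}$ by such pieces, justified by compactness, then upgrades the local identity $\lbdim = \lhdim$ to the desired global equality $\bdim(\Sigma_{(\mf{p},\mf{q})}) = \hdim(\Sigma_{(\mf{p},\mf{q})})$.

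The main obstacle is the $E$-dependence of the Fricke-Vogt invariant that is specific to the tridiagonal model: the spectrum is not cut out of a single hyperbolic horseshoe living on a fixed invariant surface, so the two-dimensional Axiom A dimension machinery does not apply verbatim. Overcoming this requires a parametrized version of the Manning--McCluskey equality $\bdim = \hdim$, with hyperbolicity constants uniform as $I(E)$ varies over a compact subinterval of $(0,\infty)$, or equivalently a single Markov partition for the three-dimensional trace map that is adapted to the foliation $\{S_I\}$. I expect this uniform control to be the genuine technical hurdle and to be precisely where the assumptions $\|(\mf{p},\mf{q})-(1,0)\|$ small, or one of $|\mf{p}|,|\mf{q}|$ large with the other small, are actually used.
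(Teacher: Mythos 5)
Your high-level plan --- reduce the theorem to transversality of the initial-condition curve with the center-stable lamination and then invoke the coincidence of box and Hausdorff dimension for dynamically defined Cantor sets --- is the correct skeleton, and it matches the paper's reduction via Proposition \ref{prop:trans}. Your handling of parts \emph{ii} and \emph{iii} also essentially coincides with the paper's Proposition \ref{prop:trans-2}: transversality is known from Casdagli's and Dahl's work on the pure diagonal and off-diagonal lines at strong coupling, and $C^1$-openness of transversality carries it to small perturbations of those lines, which is exactly what the paper does there.

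The genuine gap is in part \emph{i}. You propose to deduce transversality for $(\mf{p},\mf{q})$ near $(1,0)$ by perturbing from nearby axis points where it is known and invoking openness, but the size of the allowable perturbation is not uniform as the axis point tends to $(1,0)$: as $\mf{q}\to 0$ (or $\mf{p}\to 1$) the relevant hyperbolic sets sit on surfaces $S_V$ with $V$ arbitrarily small and cluster near the conic singularities $P_1,P_2$ of $S_0$, where stable and unstable directions become nearly parallel and expansion degenerates, so the $C^1$-neighborhood in which openness applies shrinks to a point. Hence this argument does not produce the uniform $\epsilon>0$ the theorem asserts. Likewise, your Step 1 cannot be made uniform in regime \emph{i}: $\inf_{E\in\Sigma}I\circ\gamma(E)\to 0$ as $(\mf{p},\mf{q})\to(1,0)$, and it equals $0$ for parameters in the set $\mf{N}$ of Theorem \ref{thm_main_result1}. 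What actually closes this gap in the paper is Proposition \ref{prop:trans-1}, a substantial separate argument: invariant cone fields adapted to the degenerating surfaces $\set{S_V}$ near the singularities (Lemma \ref{lem:3dcones}), quantitative expansion estimates for orbits passing near $P_1$ (Lemma \ref{lem:prop-help1}, Proposition \ref{prop:clen}), the explicit control $\partial V/\partial E = V/(E-E_0)$ on the normal component of $\gamma'$ along the foliation, and the analysis of the local center-stable and strong-stable manifolds at $P_1$ (Lemma \ref{lem:prop-help2}). A secondary point: once transversality is in hand, no uniform-in-$V$ hyperbolicity (your Steps 1--2) is required at all. The paper resolves the $E$-dependence of the Fricke--Vogt invariant --- which you correctly flag as the main obstacle but leave unresolved --- by constructing a holonomy from the fixed-$V$ Cantor sets $T_V$ to the intersection set $\Gamma$ along the center-stable lamination and proving it is H\"older with exponent arbitrarily close to $1$ (Lemmas \ref{lem:curve-ratio}, \ref{lem:lip}, \ref{lem:holder}), which transfers $\bdim(T_V)=\hdim(T_V)$ to $\Gamma$ without any uniformity in $V$.
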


In the statement of the next theorem, denote the density of states for the operator $H_{(\mf{p},\mf{q})}$ by $\mathcal{N}$ and the corresponding measure by $d\mathcal{N}$ (for definitions, properties and examples, see, for example, \cite[Chapter 5]{Teschl1999}). Of course, $\mathcal{N}$, and consequently $d\mathcal{N}$, depend on $(\mf{p},\mf{q})$. We quickly recall that $d\mathcal{N}$ is a non-atomic Borel probability measure  on $\R$ whose topological support is the spectrum $\Sigma_{(\mf{p},\mf{q})}$.

The next theorem states that the point-wise dimension of $d\mathcal{N}$ exists $d\mathcal{N}$-almost everywhere, but may depend on the point, unlike in the diagonal case (compare Theorem \ref{thm_main2} with the results of \cite{Damanik2011}).

\begin{thm}\label{thm_main2}
For all $(\mf{p},\mf{q})\in \R^2$, there exists $\mf{V}_{(\mf{p},\mf{q})}\subset \R$ of full $d\mathcal{N}$-measure, such that for all $E\in \mf{V}_{(\mf{p},\mf{q})}$ we have
\begin{align}\label{eq:thm_main2-1}
\lim_{\epsilon\downarrow 0}\frac{\log\mathcal{N}(E-\epsilon, E+\epsilon)}{\log\epsilon} = d_{(\mf{p},\mf{q})}(E)\in\R,
\end{align}
$d_{(\mf{p},\mf{q})}(E) > 0$. Moreover, if $(\mf{p},\mf{q})\neq (1,0)$, then 
\begin{align}\label{eq:thm_main2-3}
d_{(\mf{p},\mf{q})}(E) < \lhdim(\Sigma_{(\mf{p},\mf{q})}, E).
\end{align}
Also, 
\begin{align}\label{eq:thm_main2-2}
\lim_{(\mf{p},\mf{q})\rightarrow (1,0)}\sup_{E\in\mf{V}_{(\mf{p},\mf{q})}}\set{d_{(\mf{p},\mf{q})}(E)} = \lim_{(\mf{p},\mf{q})\rightarrow (1,0)}\inf_{E\in\mf{V}_{(\mf{p},\mf{q})}}\set{d_{(\mf{p},\mf{q})}(E)} = 1.
\end{align}
\end{thm}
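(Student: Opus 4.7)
The plan is to transfer the question into thermodynamic formalism on the Fibonacci trace map $T_F$. Building on the setup already used for Theorems \ref{thm_first_result}--\ref{thm_main1}, the spectrum $\Sigma_{(\mf{p},\mf{q})}$ is identified with the projection to the energy coordinate of a locally maximal, uniformly hyperbolic invariant set $\Lambda_{(\mf{p},\mf{q})}$ for $T_F$; the projection $\pi\colon\Lambda_{(\mf{p},\mf{q})}\to\Sigma_{(\mf{p},\mf{q})}$ is H\"older continuous. The crucial first step is to realize $d\mathcal{N}$ as the push-forward under $\pi$ of the measure of maximal entropy $\mu_*$ for $T_F|_{\Lambda_{(\mf{p},\mf{q})}}$: counting bands of the canonical $F_n$-periodic approximation of $H$ that lie in a fixed interval gives, in the limit, both $\mathcal{N}$ of that interval (after division by $F_n$) and the $\mu_*$-mass of the corresponding cylinder, since $\log\phi$ is the topological entropy of the Fibonacci subshift.

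With this identification, existence of the pointwise dimension $d\mathcal{N}$-a.e.\ follows from the Young / Manning--McCluskey formula for ergodic hyperbolic measures with one-dimensional unstable direction:
\begin{align*}
d_{(\mf{p},\mf{q})}(E) \;=\; \frac{h_{\mu_*}}{\chi^u_{\mu_*}(E)}\qquad\mu_*\text{-a.e.},
\end{align*}
where $\chi^u_{\mu_*}(E)$ is the unstable Lyapunov rate at $\pi^{-1}(E)$. Both quantities are strictly positive on the hyperbolic set, giving $d_{(\mf{p},\mf{q})}(E)>0$. The energy-dependence of the Fricke--Vogt invariant in the tridiagonal regime is exactly what allows $\chi^u_{\mu_*}(E)$, hence $d_{(\mf{p},\mf{q})}(E)$, to genuinely depend on $E$, unlike in the diagonal and off-diagonal cases.

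The strict inequality \eqref{eq:thm_main2-3} I would deduce from Bowen's equation $P_{T_F}(-s\,\varphi^u)=0$ at $s=\lhdim(\Sigma_{(\mf{p},\mf{q})},E)$, where $\varphi^u$ is the logarithm of the unstable Jacobian, together with the variational principle: the supremum $\sup_\mu h_\mu/\int\varphi^u\,d\mu$ is attained only by the equilibrium state of $-s\varphi^u$, which coincides with $\mu_*$ if and only if $\varphi^u$ is cohomologous to a constant. By Liv\v sic's theorem this reduces to exhibiting two periodic orbits of $T_F$ with distinct averaged expansions, which is straightforward once $(\mf{p},\mf{q})\neq(1,0)$. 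For \eqref{eq:thm_main2-2}, Theorem \ref{thm_main_result1}(iii) already supplies $\lhdim(\Sigma_{(\mf{p},\mf{q})},E)\to 1$; combining the sandwich $d_{(\mf{p},\mf{q})}(E)\le\lhdim(\Sigma_{(\mf{p},\mf{q})},E)$ with parametric continuity of $h_{\mu_*}/\chi^u_{\mu_*}$ on the hyperbolic set forces the sup and inf in \eqref{eq:thm_main2-2} to collapse to $1$. The main obstacle is that at $(1,0)$ the trace map loses hyperbolicity, so both $h_{\mu_*}$ and $\chi^u_{\mu_*}$ tend to zero simultaneously; controlling this $0/0$ limit uniformly in $E$ demands a quantitative comparison (via, for example, cone estimates on the trace-map derivative) rather than a soft continuity argument, and this is where I expect the heaviest technical work.
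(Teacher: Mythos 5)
Your proposal rests on a premise that is precisely what fails in the tridiagonal setting, and this is a genuine gap rather than a shortcut. You posit a single locally maximal uniformly hyperbolic invariant set $\Lambda_{(\mf{p},\mf{q})}$ whose projection is the spectrum, together with a single measure of maximal entropy $\mu_*$ that pushes forward to $d\mathcal{N}$. No such object exists here. Because $I\circ\gamma_{(\mf{p},\mf{q})}$ is an affine, non-constant function of the energy (see \eqref{eq_fv_value}), the line of initial conditions $\gamma^*$ is transversal to the one-parameter family of level surfaces $\{S_V\}$, so each spectral point corresponds to an intersection with a center-stable manifold landing on a \emph{different} level surface $S_{V_m}$ with a \emph{different} hyperbolic nonwandering set $\Omega_{V_m}$. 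The union $\bigcup_V\Omega_V$ is only partially hyperbolic (the $V$-direction is a neutral center direction), and there is no single ergodic measure on it with respect to which a Young/Manning--McCluskey formula $d=h/\chi^u$ would make sense; by ergodicity such a measure would force $d_{(\mf{p},\mf{q})}(E)$ to be constant $\mu_*$-a.e., which contradicts the very phenomenon the theorem is asserting (energy-dependence of the pointwise dimension). The Livšic/cohomological-constancy argument for \eqref{eq:thm_main2-3} inherits the same defect: you would be applying it on a hyperbolic set that does not exist.

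The paper circumvents this by working locally on each $S_{V_m}$ (where the thermodynamic-formalism results of [Damanik--Gorodetski] do apply and deliver the local exponent $d(m)$ and the strict inequality $d(m)<\tfrac12\hdim(\Omega_{V_m})$) and then \emph{transferring} those estimates to $\gamma^*$ via holonomy along the center-stable laminae, controlled by the H\"older estimate of Lemma \ref{lem:holder} with exponent $\alpha$ arbitrarily close to $1$. This produces the sandwich \eqref{eq:prop-exactness} and, letting $\alpha\to1$ through the countable family $U_n$, the a.e.\ existence of the limit. The other piece you omit entirely is the identification of $d\mathcal{N}$ with the measure $\mu$ built by pushing the free density of states along the stable lamination of $W^{\mathrm{cs}}(P_1)$ (Claim \ref{claim:prop-help1} and Lemma \ref{lem:dg2011}); this is where the gap-labeling / band-counting heuristic you invoke is made rigorous, and it is nontrivial because the holonomy in question crosses level surfaces. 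In short, your intuition about thermodynamic formalism is pointing at the right ingredients on each fixed $S_V$, but it misses the central technical contribution of the proof: the measure transport and H\"older-holonomy argument that glue together the level-dependent local pictures into a statement about $d\mathcal{N}$ on the energy axis.
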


\section{Proof of main results}\label{sec_proves}

Assume, unless stated otherwise, that $(\mf{p},\mf{q}) \neq (1,0)$. Let $\widetilde{\omega}_k$ be a periodic word of period $F_k$ with unit cell $[\omega_s]_1\cdots[\omega_s]_{F_k}$. Let
\begin{align*}
(\widetilde{H}^k\phi)_n = p([\widetilde{\omega}_k]_{n})\phi_{n-1} + p([\widetilde{\omega}_k]_{n+1})\phi_{n+1} + q([\widetilde{\omega}_k]_{n})\phi_{n}.
\end{align*}
If $\theta(\lambda)\in\R^\Z$ satisfies
\begin{align}\label{eq_diff}
\widetilde{H}^k\theta(\lambda) = \lambda\theta(\lambda),
\end{align}
then for all $n\in\Z$,
\begin{align}\label{eq_recurrence_ahaml}
p([\widetilde{\omega}_k]_{n+1})\theta_{n+1}(\lambda) = (\lambda - q([\widetilde{\omega}_k]_n))\theta_n(\lambda) - p([\widetilde{\omega}_k]_{n})\theta_{n-1}(\lambda).
\end{align}
Take $\psi(\lambda), \phi(\lambda)\in\R^\Z$, with $\phi_0 = \psi_{-1} = 1$, $\phi_{-1} = \psi_0 = 1$, satisfying \eqref{eq_diff}. By Floquet theory \cite{Toda1981} ,
\begin{align}\label{the_spectrum}
\sigma(\widetilde{H}^k) = \sigma_k:=\set{\lambda: \frac{1}{2}\abs{\phi_{F_k}(\lambda) + \psi_{F_k-1}(\lambda)}\leq 1}.
\end{align}
We write $p_{k,n}$ for $p([\widetilde{\omega}_k]_n)$; similarly for $q$. Define
\begin{align}\label{eq_trans_mat}
M_n(\lambda):=
\frac{1}{p_{k,n}}\begin{pmatrix}
\lambda - q_{k,n} & -p_{k,n-1}\\
p_{k,n} & 0
\end{pmatrix};\text{\hspace{2mm}}
T_n(\lambda):=
\frac{1}{p_{k,n}}\begin{pmatrix}
\lambda - q_{k,n} & -1\\
p_{k,n}^2 & 0
\end{pmatrix}
\end{align}
and let $\Theta_n = (\theta_n, p_{k,n}\theta_{n-1})^\mathrm{T}$. By \eqref{eq_recurrence_ahaml}, $\theta$ satisfies \eqref{eq_diff} if and only if
\begin{align}\label{eq_theta_trans}
\begin{pmatrix}
\theta_n\\
\theta_{n-1}
\end{pmatrix} = M_n
\begin{pmatrix}
\theta_{n-1}\\
\theta_{n-2}
\end{pmatrix}\iff
\Theta_n = T_n\Theta_{n-1}\text{\hspace{2mm}for all\hspace{2mm}}n\in\Z.
\end{align}
Define
\begin{align*}
\widehat{T}_k(\lambda) = T_{F_k}(\lambda)\times\cdots\times T_1(\lambda).
\end{align*}
From \eqref{eq_theta_trans} we have $\Theta_{F_k} = \widehat{T}_k\Theta_0$; hence using $\phi$ and $\psi$ in place of $\theta$ we get $\phi_{F_k} = [\widehat{T}_k]_{11}$ and $p_{k,F_k}\psi_{F_k - 1} = p_{k,0}[\widehat{T}_k]_{22}$. Since $\widetilde{\omega}_k$ is $F_k$-periodic, $p_{k,F_k} = p_{k,0}$, so
\begin{align}\label{eq_disc_to_trace}
\frac{1}{2}\abs{\phi_{F_k}(\lambda) + \psi_{F_k - 1}(\lambda)} = \frac{1}{2}\abs{\Tr \widehat{T}_k(\lambda)}.
\end{align}

\subsection{Proof of Theorem \ref{thm_first_result}}\label{proof_thm_first}

Let $\Sigma_{(\mf{p},\mf{q})}$ denote $\sigma(H_{\omega_s})$. It is known that $(\Omega, T)$ is topologically minimal, hence for all $\omega\in\Omega$, $\sigma(H_\omega) = \Sigma_{(\mf{p},\mf{q})}$ (see, for example, \cite{Damanik2000}).

Since $\widehat{T}_k$ is unimodular and, by \eqref{eq_u_prop}, $\widehat{T}_{k+2} = \widehat{T}_{k+1}\widehat{T}_k$, we have, with $2x_k = \Tr \widehat{T}_k$,
\begin{align*}
(x_{k+3}, x_{k+2}, x_{k+1}) = f(x_{k+2}, x_{k+1}, x_k),
\end{align*}
where $f(x,y,z) = (2xy - z, x, y)$ is the \textit{Fibonacci trace map} (for a survey, see \cite{Baake1999} and references therein). The initial condition $(x_3, x_2, x_1)$ is rather complicated. For a simpler expression, we take (we omit calculations)
\begin{align}\label{gamma_defn}
\gamma(\lambda):= (x_1, x_0, x_{-1}) = f^{-2}(x_3,x_2,x_1)= \left(\frac{\lambda - \mf{q}}{2}, \frac{\lambda}{2\mf{p}}, \frac{1 + \mf{p}^2}{2\mf{p}}\right),
\end{align}
where $f^{-1}(x,y,z) = (y, z, 2yz - x)$ is the inverse of $f$ (compare with the initial conditions in, for example, \cite{Damanik2009} and in \cite[Appendix A]{Damanik2010}). We write $\gamma_{(\mf{p},\mf{q})}$ to emphasize dependence on $(\mf{p},\mf{q})$ when necessary.

Fix $C > \abs{(1 + \mf{p}^2)/2\mf{p}}\geq 1$ and for $k\geq -1$ define
\begin{align*}
\widehat{\sigma}_k = \set{\lambda: \frac{1}{2}\abs{x_k}\leq C}.
\end{align*}
These sets are closed and $\widehat{\sigma}_k\cup\widehat{\sigma}_{k+1}\supseteq \widehat{\sigma}_{k+1}\cup\widehat{\sigma}_{k+2}$. Moreover, for any $l\geq -1$,
\begin{align}\label{eq_b_infty}
\bigcap_{k\geq l}\widehat{\sigma}_k\cup\widehat{\sigma}_{k+1} = B_\infty:=\set{\lambda: \mathcal{O}_f^+(\gamma(\lambda))\text{\hspace{2mm}is bounded}},
\end{align}
where $\mathcal{O}_f^+(\mathbf{x}) = \set{\mathbf{x},f(\mathbf{x}), f^2(\mathbf{x}),\dots}$ is the positive semi-orbit of $\mathbf{x}$ under $f$ (see \cite[Proposition 3.1]{Yessen2011}, which is a slight extension of \cite[Proposition 5.2]{Damanik2005}). Since $\widetilde{H}^k\xrightarrow[k\rightarrow\infty]{}H$ strongly, combining \eqref{eq_b_infty}, \eqref{eq_disc_to_trace} and \eqref{the_spectrum}, we get
\begin{align*}
\Sigma_{(\mf{p},\mf{q})}\subset\bigcap_{l\geq 1}\overline{\bigcup_{k\geq l}\sigma_k}\subset \bigcap_{k\geq 1}\widehat{\sigma}_k\cup\widehat{\sigma}_{k+1} = B_\infty.
\end{align*}
Since $\set{p_{k,n}}_{k,n\in\N}$ is uniformly bounded away from zero and infinity and $\omega_s$ satisfies \eqref{eq_extended_u}, the argument in \cite{Suto1987} applies and gives $B_\infty\subseteq \Sigma_{(\mf{p},\mf{q})}$. Hence
\begin{align}\label{eq:dyn-op-spec}
B_\infty = \Sigma_{(\mf{p},\mf{q})}.
\end{align}
(See also Remark \ref{rem:alt-proof} below for an outline of an alternative proof of \eqref{eq:dyn-op-spec}).

Define
\begin{align*}
\mathcal{Z} = \set{\lambda: \lim_{k\rightarrow\infty}\frac{1}{k}\log\norm{\widehat{T}_k(\lambda)} = 0}.
\end{align*}
By Kotani theory (see \cites{Kotani1989, Damanik2007b}, and \cite{Remling2011} for extension to Jacobi operators), $\mathcal{Z}$ has zero Lebesgue measure, and by \cite{Iochum1991}, $B_\infty\subseteq\mathcal{Z}$ (this also follows from an earlier work by A. S\"ut\H{o} -- see \cite{Suto1989} -- and a later (and more general) work of D. Damanik and D. Lenz in \cite{Damanik1999}). Hence $\Sigma_{(\mf{p},\mf{q})}$ has zero Lebesgue measure.

The argument in \cite[Section A.3]{Damanik2010}, without modification, shows that for all $\omega\in\Omega$ $\sigma(H_\omega)$ is purely singular continuous. So $\Sigma_{(\mf{p},\mf{q})}$ contains no isolated points, is compact and has zero Lebesgue measure. Thus it is a Cantor set. This completes the proof.

\begin{rem}\label{rem:alt-proof}
An alternative proof of \eqref{eq:dyn-op-spec} can be given as follows. Using the results of \cite{Avron1990}, we get convergence in Hausdorff metric of the sequence of spectra of periodic approximations, $\set{\sigma_k}$, to the spectrum of the limit quasi-periodic operator. On the other hand, \cite[Theorem 2.1-i]{Yessen2011} shows convergence of $\set{\sigma_k}$ to $B_\infty$. One only needs to note that \cite[Theorem 2.1-i]{Yessen2011} relies on transversality (see Section \ref{proof_i} below), which, as discussed below, we have everywhere except possibly at finitely many points (which does not affect the conclusion of \cite[Theorem 2.1-i]{Yessen2011}).
\end{rem}

\subsection{Proof of Theorem \ref{thm_main_result1}}\label{proof_thm_second}

For the necessary notions from hyperbolic and partially hyperbolic dynamics, see a brief outline in \cite[Appendix B]{Yessen2011}, and \cites{Hirsch1968, Hirsch1970, Hirsch1977, Hasselblatt2002b, Hasselblatt2002} for details.

Define the so-called \textit{Fricke-Vogt invariant} by
\begin{align*}
I(x,y,z) := x^2 + y^2 + z^2 - 2xyz - 1,
\end{align*}
and the corresponding level sets
\begin{align*}
S_V := \set{(x,y,z)\in\R^3: I_V(x,y,z) - V = 0}
\end{align*}
(see Figure \ref{part2_fig1}).
%
%
%
%
%
\begin{figure}[t]
\centering
 \subfigure[$V = 0.0001$]{
\includegraphics[scale=.25]{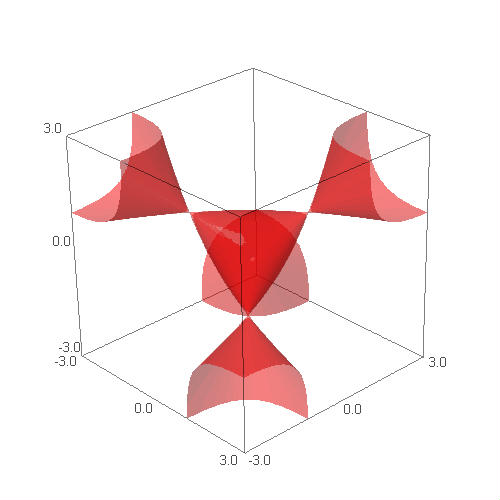}}
 \subfigure[$V = 0.01$]{
\includegraphics[scale=.25]{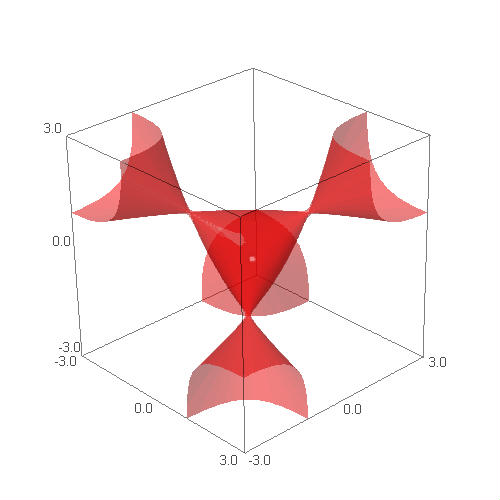}}
\\
 \subfigure[$V = 0.05$]{
\includegraphics[scale=.25]{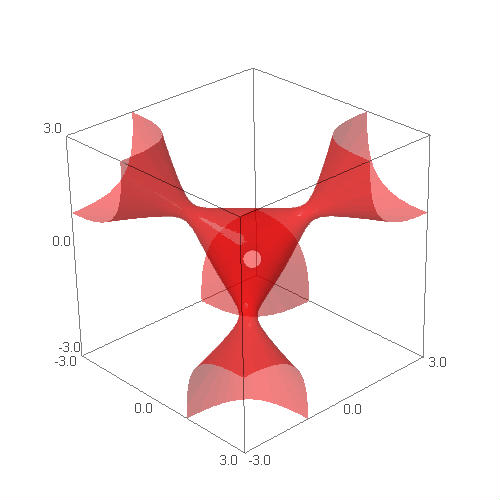}}
 \subfigure[$V = 1$]{
\includegraphics[scale=.25]{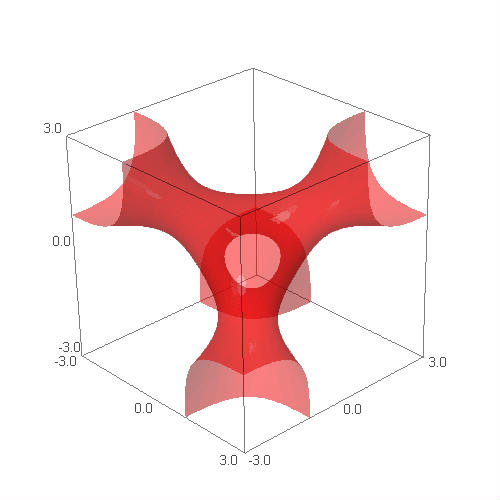}}
\caption{Invariant surfaces $S_V$ for four values of $V$.}
\label{part2_fig1}
\end{figure}
We're interested in $S_{V > 0}$. In this case $S_V$ is a non-compact, connected analytic two-dimensional submanifold of $\R^3$. We have $I_V\circ f = I_V$, consequently $f(S_V) = S_V$. We write $f_V$ for $f|_{S_V}$. The nonwandering set $\Omega_V$ for $f_V$ on $S_V$ is compact $f_V$-invariant locally maximal transitive hyperbolic set (see \cites{Casdagli1986, Cantat2009, Damanik2009}). Consequently, for $x\in S_V$, $\mathcal{O}_{f_V}^+(x)$ is bounded if and only if there exists $y\in\Omega_V$ with $x\in W^\mathrm{s}(y)$, the stable manifold at $y$ (this follows from general principles). There exists a family  $\mathcal{W}^\mathrm{s}$ of smooth two-dimensional injectively immersed pair-wise disjoint submanifolds of $\R^3$, called the \textit{center-stable manifolds} and denoted $W^\mathrm{cs}$, such that
\begin{align*}
\bigcup_{V > 0}\cup_{y\in\Omega_V}W^\mathrm{s}(y) = \bigcup_{W^\mathrm{cs}\in\mathcal{W}^\mathrm{s}}W^\mathrm{cs}
\end{align*}
(see \cite[Proposition 3.9]{Yessen2011}). It follows that for $x\in S_V$, $\mathcal{O}_f^+(x)$ is bounded if and only if $x\in W^\mathrm{cs}$ for some $W^\mathrm{cs}\in\mathcal{W}^s$.

\subsubsection{Proof of i}\label{proof_i}

In the proof below, isolation of tangential intersections (if such exist) was suggested by A. Gorodetski, and the use of \cite[Lemma 6.4]{Bedford1993} was suggested by S. Cantat.

We have
\begin{align}\label{eq_fv_value}
I\circ\gamma(\lambda) = \frac{\lambda\mf{q}(1-\mf{p}^2) + \mf{q}^2\mf{p}^2 + (\mf{p}^2 - 1)^2}{4\mf{p}^2},
\end{align}
which is $\lambda$-dependent (compare with \cites{Damanik2009} and \cite[Appendix A]{Damanik2010}).  Denote by $\gamma^*$ the image of $\gamma$. Since $\gamma^*\subset\set{z = \frac{1 + \mf{p}^2}{2\mf{p}}}$, which is away from the unit cube $\set{(x,y,z): \abs{x},\abs{y},\abs{z}\leq 1}$ when $\mf{p}\neq 1$, for all $\lambda$ with $I\circ\gamma(\lambda) < 0$ (which can only happen when $\mf{p}\neq 1$), $\mathcal{O}_f^+(\gamma(\lambda))$ escapes to infinity (see \cite{Roberts1996}), and these points do not interest us. Application of \cite[Section 3]{Kadanoff1984} with the initial conditions \eqref{gamma_defn} in mind gives similar result for all $\lambda$ sufficiently large. Thus we restrict our attention to a compact line segment along $\gamma^*$, which we denote by $\overline{\gamma^*}$, and which lies entirely in $\bigcup_{V > 0}S_V$.

Take $m\in\overline{\gamma^*}$ whose forward orbit is bounded. Let $U_m$ be a small neighborhood of $m$ in $\R^3$. Pick a plane $\Pi_m$ containing $\overline{\gamma^*}$ and transversal at $m$ to the center-stable manifold containing $m$. Since $f_V$ is analytic and depends analytically on $V$, the center-stable manifolds are analytic (for a detailed proof in the case of Anosov diffeomorphisms, see \cite[Theorem 1.4]{Llave1986}). Hence the intersection of $\Pi_m$ with the center-stable manifolds in the neighborhood $U_m$, assuming $U_m$ is sufficiently small, gives a family of analytic curves $\set{\vartheta}$ in $\Pi_m$ (see \cite[Proof of Theorem 2.1-iii]{Yessen2011}). Those curves that intersect $\overline{\gamma^*}$ can be parameterized continuously (in the $C^{k\geq 1}$-topology) via $\overline{\gamma^*}\ni n\mapsto \vartheta(n)$ if and only if $n\in\vartheta(n)\cap\overline{\gamma^*}$. This allows us to apply \cite[Lemma 6.4]{Bedford1993} and conclude that $\vartheta(n)$ intersects $\overline{\gamma^*}$ transversally for all, except possibly finitely many, $n\in\overline{\gamma^*}$. By compactness, $\overline{\gamma^*}$ intersects the center-stable manifolds transversally at all, except possibly finitely many, points along $\overline{\gamma^*}$. Observe that, with $(\mf{p},\mf{q})\neq (1,0)$,
\begin{align*}
\frac{\partial I\circ \gamma}{\partial\lambda} = \frac{\mf{q}(1 - \mf{p}^2)}{4\mf{p}^2}\neq 0.
\end{align*}
It follows that $\overline{\gamma^*}$ intersects the invariant surfaces $\set{S_V}_{V > 0}$ transversally. Let $m\in\overline{\gamma^*}\cap S_V$ be a point of transversal intersection with the center-stable manifold. Application of \cite[Proof of Theorem 2.1-iii]{Yessen2011} shows that
\begin{align}\label{eq_local_dim}
\lhdim(\overline{\gamma^*}, m) = \frac{1}{2}\hdim(\Omega_V).
\end{align}
Since $V\mapsto\hdim(\Omega_V)$ is continuous (in fact, analytic---see \cite[Theorem 5.23]{Cantat2009}) and the points of tangential intersection, if such exist, are isolated, \eqref{eq_local_dim} holds for all points of intersection of $\overline{\gamma^*}$ with the center-stable manifolds. This proves the continuity statement. That the local Hausdorff dimension is nonconstant follows by the observation in \cite[Proof of Theorem 2.1-iii]{Yessen2011}; that it is constant in the diagonal and the off-diagonal cases follows from the observation that in these cases $I\circ\gamma(\lambda) > 0$ is $\lambda$-independent (see \cites{Damanik2009, Damanik2010}).

\subsubsection{Proof of ii-(a)}

Let $\lambda_0: \R^2\setminus\set{(1,0)}\rightarrow \R$ be such that $I\circ\gamma_{(\mf{p},\mf{q})}\circ\lambda_0(\mf{p},\mf{q}) = 0$. Define
\begin{align*}
\mathcal{C} = \set{(x,y,z): I(x,y,z) - V = 0\text{\hspace{2mm}and\hspace{2mm}}\abs{x},\abs{y},\abs{z}\leq 1}^\mathrm{c}.
\end{align*}
Then $\mathcal{C}$ is a smooth two-dimensional submanifold of $\R^3$ with four connected components (see, for example, \cites{Baake1999} and \cites{Roberts1994, Roberts1994b}), and the map $F: \R^2\setminus\set{(1,0)}\rightarrow\mathcal{C}$ defined as $F(\mf{p},\mf{q}) = \gamma_{(\mf{p},\mf{q})}\circ\lambda_0(\mf{p},\mf{q})$
%
%
is smooth. There exist four smooth curves in $\mathcal{C}$, whose union we denote by $\tau$, such that for all $x\in\mathcal{C}$, $\mathcal{O}_{f_0}^+(x)$ is bounded if and only if $x\in\tau$ (see \cites{Cantat2009, Damanik2009}). Let $\mf{N} = F^{-1}(\tau)$. Then $\mf{N}$ has zero Lebesgue measure, and for all $(\mf{p},\mf{q})\notin\mf{N}$, the intersection of the corresponding $\overline{\gamma^*}$ with the center-stable manifolds is away from $S_0$. Now using \eqref{eq_local_dim} together with the fact that 
\begin{align}\label{bounds_on_dim}
\text{for all\hspace{2mm}} V > 0,\text{\hspace{2mm}} 0 < \hdim(\Omega_V) < 2
\end{align}
(see \cites{Damanik2009c, Cantat2009}), we obtain \textit{ii-(a)}.

\subsubsection{Proof of ii-(b)}

Let $P = (1,1,1)$. One of the four curves mentioned above is a branch of the \textit{strong stable manifold at} $P$, which we denote by $W^\mathrm{ss}$; the tangent space $T_PW^\mathrm{ss}$ is spanned by the eigenvector of the differential of $f$ at $P$ corresponding to the smallest eigenvalue (see \cite[Section 4]{Damanik2009}). A simple computation, which we omit here, shows that $T_PW^\mathrm{ss}$ is transversal to the plane $\set{z = 1}$. Hence for all $\mf{p}\approx 1$, $W^\mathrm{ss}\cap \set{z = \frac{1 + \mf{p}^2}{2\mf{p}}}\neq \emptyset$. On the other hand, the first coordinate of $\gamma$ depends only on $\mf{q}$; hence, evidently from \eqref{gamma_defn}, for any $x\in\set{z = \frac{1 + \mf{p}^2}{2\mf{p}}}$ there exists $\mf{q}$ such that $x\in\gamma^*_{(\mf{p},\mf{q})}$. Thus, $\mf{N}\neq \emptyset$.

Let $(\mf{p},\mf{q})\in\mf{N}$, and $m\in\overline{\gamma^*_{(\mf{p},\mf{q})}}\cap S_0$. Then $\gamma^{-1}(\set{m})$ is one of the two extreme boundary points of the spectrum, and away from it, by \eqref{eq_local_dim} and \eqref{bounds_on_dim}, the local Hausdorff dimension is strictly between zero and one. On the other hand,
\begin{align}\label{eq_lim_dim}
\lim_{V\rightarrow 0^+}\hdim(\Omega_V) = 2
\end{align}
(see \cite[Theorem 1.1]{Damanik2010}). Hence $\hdim(\Sigma_{(\mf{p},\mf{q})}) = 1$.

\subsubsection{Proof of iii} This follows from \eqref{eq_lim_dim}, since $\overline{\gamma^*_{(\mf{p},\mf{q})}}$ depends continuously on $(\mf{p},\mf{q})$, and is close to $S_0$ whenever $(\mf{p},\mf{q})$ is close to $(1,0)$ (see equation \eqref{eq_fv_value}).

\subsubsection{Proof of iv} This follows, since $V_\mf{p}:= I\circ\gamma_{(\mf{p},0)}$ depends analytically on $\mf{p}$, and at the same time $\hdim(\Omega_{V_{\mf{p}}})$ depends analytically on $V_\mf{p}$ (see \cite[Theorem 5.23]{Cantat2009}); similarly with $(1,\mf{q})$. 

\subsection{Proof of theorem \ref{thm_main1}}\label{sec:proof_main1}

In what follows, for a regular curve $\alpha$ in $\R^n$, by $\alpha^*$ we denote the image of $\alpha$; the length of $\alpha$ is denoted by $\len[\alpha^*]$, and for any $a, b\in\alpha^*$, the distance along $\alpha^*$ between $a$ and $b$ is denoted by $\dist_{\alpha^*}(a,b)$ (i.e. the length of the arc along $\alpha^*$ connecting $a$ and $b$).

We also assume, unless stated otherwise, that $(\mf{p},\mf{q})\neq (1,0)$, and we always have $\mf{p}\neq 0$.

\begin{prop}\label{prop:trans}
The conclusion of Theorem \ref{thm_main1} holds for all $(\mf{p},\mf{q})$ such that $\gamma_{(\mf{p},\mf{q})}$ intersects the center-stable manifolds transversally.
\end{prop}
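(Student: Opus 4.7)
The plan is to upgrade the argument of Section \ref{proof_i}, which establishes the local \emph{Hausdorff} dimension formula \eqref{eq_local_dim}, to a matching formula for the local \emph{box-counting} dimension, and then to globalize. The key input is that for a locally maximal transitive hyperbolic set of a real-analytic area-preserving surface diffeomorphism, the upper and lower box-counting dimensions exist and coincide with the Hausdorff dimension.

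By \eqref{eq:dyn-op-spec} and the explicit formula \eqref{gamma_defn}, the map $\lambda\mapsto\gamma(\lambda)$ is a bi-Lipschitz embedding of the compact line segment carrying $\Sigma_{(\mf{p},\mf{q})}$ onto $\overline{\gamma^*}$, so it preserves Hausdorff and upper/lower box-counting dimensions, locally and globally. It therefore suffices to work with $\overline{\gamma^*}\cap\bigcup_{W^\mathrm{cs}\in\mathcal{W}^\mathrm{s}} W^\mathrm{cs}$. Next, I would localize near an intersection point $m\in\overline{\gamma^*}\cap W^\mathrm{cs}\cap S_V$ with $V>0$. By hypothesis $\overline{\gamma^*}$ crosses $W^\mathrm{cs}$ transversally, and by the computation $\partial(I\circ\gamma)/\partial\lambda\neq 0$ from Section \ref{proof_i} it also crosses each $S_V$ transversally. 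Consequently, center-stable holonomy composed with the transverse $V$-parameter provides a bi-Lipschitz identification of a neighborhood of $m$ in $\overline{\gamma^*}\cap\bigcup W^\mathrm{cs}$ with a stable Cantor slice of $\Omega_V$ in $S_V$. Running the argument of \cite[Proof of Theorem 2.1-iii]{Yessen2011} for both dimensions simultaneously yields
\begin{align*}
\lhdim(\overline{\gamma^*}, m) = \tfrac{1}{2}\hdim(\Omega_V), \qquad \lbdim(\overline{\gamma^*}, m) = \tfrac{1}{2}\bdim(\Omega_V),
\end{align*}
with the local box-counting dimension existing.

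Since $f_V$ is a real-analytic conservative diffeomorphism of $S_V$ with locally maximal transitive hyperbolic set $\Omega_V$, standard surface hyperbolic dimension theory gives $\bdim(\Omega_V) = \hdim(\Omega_V)$, and area preservation forces equality of stable and unstable Cantor-slice dimensions, which is what produces the factor $\tfrac{1}{2}$ above. Combined, these yield $\lbdim(\Sigma_{(\mf{p},\mf{q})}, E) = \lhdim(\Sigma_{(\mf{p},\mf{q})}, E)$ for every $E\in\Sigma_{(\mf{p},\mf{q})}$. By the analyticity of $V\mapsto\hdim(\Omega_V)$ from \cite[Theorem 5.23]{Cantat2009} and compactness of $\Sigma_{(\mf{p},\mf{q})}$, a finite-subcover argument promotes this pointwise equality to $\bdim(\Sigma_{(\mf{p},\mf{q})}) = \hdim(\Sigma_{(\mf{p},\mf{q})})$ with the global box-counting dimension existing.

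The main obstacle is the simultaneous existence of matching upper and lower box dimensions at the local level: one needs a version of the covering/packing estimates from \cite{Yessen2011} robust enough to deliver both inequalities with equal exponents, uniformly in $V$ as $m$ varies along $\overline{\gamma^*}$. Given the analyticity of $\gamma$ and the continuous dependence of $\hdim(\Omega_V)$ on $V$, this should reduce to a standard Moran-type cover construction for hyperbolic Cantor sets, pushed through the bi-Lipschitz holonomy identification.
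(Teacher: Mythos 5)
The central gap is the claim that ``center-stable holonomy composed with the transverse $V$-parameter provides a \emph{bi-Lipschitz} identification'' of a neighborhood of $m$ in $\overline{\gamma^*}\cap\bigcup W^\mathrm{cs}$ with a stable Cantor slice of $\Omega_V$. Transversality of $\overline{\gamma^*}$ to the individual leaves $W^\mathrm{cs}$ and to the surfaces $S_V$ guarantees that the holonomy map is a well-defined local homeomorphism, but it says nothing about its metric regularity. The center-stable \emph{lamination} has only H\"older transversal regularity, so the holonomy along it is in general only H\"older continuous, not Lipschitz. If holonomy were bi-Lipschitz here the whole proposition would indeed be immediate from $\bdim(T_V)=\hdim(T_V)$; precisely because it is not, the paper establishes Lemma \ref{lem:holder} instead: on sufficiently short arcs the holonomy $h$ and its inverse are H\"older with exponent $\alpha$ for \emph{any} prescribed $\alpha\in(0,1)$, resting on \cite[Lemma 4.21]{Yessen2011} together with a careful length-vs-chord control (Lemmas \ref{lem:curve-ratio} and \ref{lem:lip}). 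This weaker input only yields the sandwich $\alpha\,\hdim(T_V)\leq\underline{\bdim}(\Gamma\cap\beta^*)\leq\overline{\bdim}(\Gamma\cap\beta^*)\leq\alpha^{-1}\hdim(T_V)$ of \eqref{eq:prop2}, not equality.

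A second, related gap is the globalization step. Since box-counting dimension is only finitely (not countably) stable and the local exponent $\alpha$ depends on the arc length, one cannot just invoke compactness plus continuity of $V\mapsto\hdim(\Omega_V)$. The paper instead partitions a given arc into finitely many pieces satisfying the $[\alpha,\alpha^{-1}]$-sandwich, bounds $\overline{\bdim}-\underline{\bdim}$ on the whole arc by the worst piece as in \eqref{eq:prop3}--\eqref{eq:prop4}, and then lets $\alpha\to 1$. Your proposal correctly identifies the right skeleton (localize, relate to a dynamically defined Cantor set where $\bdim=\hdim$, globalize), but the metric control it relies on is stronger than what the geometry gives, and the missing H\"older-holonomy machinery is exactly the technical heart of Proposition \ref{prop:trans}.
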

\begin{proof}[Proof of Proposition \ref{prop:trans}]
All intersections of $\gamma^*$ with the center-stable manifolds occur only on a compact line segment along $\gamma^*$; denote this segment by $\overline{\gamma^*}$. The Fricke-Vogt invariant along $\gamma$ takes values
\begin{align}\label{eq:fvi}
I\circ\gamma(E) = \frac{E\mf{q}(1-\mf{p}^2) + \mf{q}^2\mf{p}^2 + (\mf{p}^2 - 1)^2}{4\mf{p}^2}.
\end{align}
This gives
\begin{align}\label{eq:fvi-deriv}
\frac{\partial I\circ \gamma}{\partial E} = \frac{\mf{q}(1 - \mf{p}^2)}{4\mf{p}^2}\neq 0.
\end{align}
Hence $\gamma$ intersects the level surfaces $\set{S_V}_{V \geq 0}$ transversally. Notice that $\gamma$ lies in the plane $\Pi_\mf{p} := \set{z = \frac{1 + \mf{p}^2}{2\mf{p}}}$ (see \eqref{gamma_defn}). Let $\mathcal{T}$ be a neighborhood of $\gamma$. If $\mathcal{T}$ is sufficiently small, then, by transversality and \eqref{eq:fvi-deriv}, $\Pi_\mf{p}$ intersects the center-stable manifolds as well as the level surfaces transversally inside $\mathcal{T}$, and $\widetilde{\mathcal{T}}:=\mathcal{T}\cap\Pi_{\mf{p}}$ gives a neighborhood of $\gamma$ in $\Pi_\mf{p}$. The intersection of $\Pi_\mf{p}$ with the center-stable manifolds gives a family of smooth curves in $\widetilde{\mathcal{T}}$, which we denote by $\set{\vartheta}$. The intersection of $\Pi_\mf{p}$ with the invariant surfaces gives a family of smooth curves, $\set{\tau_V = \Pi_\mf{p}\cap S_V}_{V\geq 0}$, which smoothly foliate $\widetilde{\mathcal{T}}$.
\begin{lem}\label{lem:lip}
For every intersection point $m$ of $\overline{\gamma^*}$ with the center-stable manifolds, there exists $\epsilon_m, C_m > 0$ such that the following holds. If $m\in\tau_{V_m}^*$,  $V_m > 0$, then for every $n\in\tau_{V_m}^*$, $n\neq m$, with $\dist_{\tau_{V_m}^*}(n,m) < \epsilon_m$,
\begin{align}\label{eq:lem-lip}
\left(\frac{\dist_{\tau_{V_m}^*}(n,m)}{\dist_{\gamma^*}(n, \widetilde{n})}\right)^{\pm 1}\leq C_m,
\end{align}
where $\widetilde{n}$ is the intersection point of $\overline{\gamma^*}$ with the curve $\vartheta$ from $\set{\vartheta}$ going through $n$. 
\end{lem}
\begin{proof}[Proof of Lemma \ref{lem:lip}]
We begin with the following result, which will make matters easier later.
\begin{lem}\label{lem:curve-ratio}
Let $K_\eta(v)$ denote the cone around $v\in\R^n$ of angle $\eta$:
\begin{align*}
K_\eta(v) := \set{u\in\R^n: \measuredangle(u,v) < \eta}.
\end{align*}
For $\eta < \pi/4$, for any $\epsilon\in[0,\eta]$ there exists $M = M(\epsilon) \geq 1$ such that for any regular curve $\alpha: [0,1]\rightarrow \R^n$ satisfying $\alpha'(t)\in K_\epsilon(\alpha'(0))$ for all $t$, we have $\len[\alpha^*]/\norm{\alpha(0) - \alpha(1)}\leq M$. 
\end{lem}
\begin{proof}[Proof of Lemma \ref{lem:curve-ratio}]
Let $x_1,\dots,x_n$ be the axes of $\R^n$. We may assume that $\alpha(0),\alpha(1)\in x_1$. Hence $x_1\in K_\eta(\alpha'(0))$. By regularity, if $\alpha'_1(t) = 0$ implies that $\measuredangle(x_1,\alpha'(t)) = \pi/2$, contradicting the hypothesis. Hence $\alpha'_1(t)\neq 0$ for any $t$, and we may parameterize $\alpha$ along $x_1: \alpha(t) = (t, \alpha_2(t),\dots,\alpha_n(t))$ with $t\in[\alpha(0),\alpha(1)]\subset x_1$. We have $\abs{\alpha'_j(t)} = \tan\theta$, where $\theta$ is the angle between $x_1$ and the projection of $\alpha'(t)$ onto the $(x_1,x_j)$-plane. Since $\measuredangle(\alpha'(t),x_1) < 2\epsilon$, we have $\theta < 2\epsilon$, hence $\abs{\alpha'_j(t)}<\tan2\epsilon$. Now,
\begin{align*}
\len[\alpha^*] &= \int_{\alpha(0)}^{\alpha(1)}\norm{\alpha'(t)}dt\\
&\leq \int_{\alpha(0)}^{\alpha(1)}\sum_j\abs{\alpha'_j(t)}dt\leq [\alpha(1) - \alpha(0)][1 + (n-1)\tan2\epsilon].
\end{align*}
The result follows with $M = [1 + (n-1)\tan2\epsilon]$. 
\end{proof}
Parameterize the curves $\set{\vartheta}$ by $V$ with $\vartheta(V) = \vartheta\cap\tau_V^*$ (which is made possible by transversality of intersection of the center-stable manifolds with the level surfaces $\set{S_V}_{V > 0}$ --- see Proposition 3.9 and proof of Theorem 2.1-iii in \cite{Yessen2011}). Parameterize the subfamily of $\set{\vartheta}$ of curves that intersect $\tau_{V_m}^*$ inside $\widetilde{\mathcal{T}}$ by $n\mapsto \vartheta_n$, where $\set{n} = \vartheta_n^*\cap\tau_{V_m}^*$. Define two constant cone fields $K_\eta^{\mathrm{ver}}$ and $K_\eta^\mathrm{hor}$ on $\Pi_\mf{p}$, transversal to each other, where $0 < \eta < \pi/4$ is such that $\vartheta_m$ is tangent to $K_\eta^{\mathrm{ver}}$ at $m$, $\tau_{V_m}$ is tangent to $K_\eta^{\mathrm{hor}}$ at $m$, and $\gamma^*$ is transversal to both cones. Let $\delta > 0$ such that $V_m - \delta > 0$ and set $\widetilde{\vartheta_n^*} = \vartheta_n[V_m - \delta, V_m + \delta]$. Now, taking $\delta$ sufficiently small, we have $\widetilde{\vartheta_m^*}$ tangent everywhere to $K_\eta^{\mathrm{ver}}$. Similarly, let $\widetilde{\tau_{V_m}^*}$ be a compact arc along $\tau_{V_m}^*$ containing $m$ in its interior; assuming the arc is sufficiently short, we have $\widetilde{\tau_{V_m}^*}$ tangent everywhere to $K_\eta^{\mathrm{hor}}$. The curves $\vartheta_n$ depend continuously on $n\in\tau_{V_m}^*$ in the $C^1$-topology (see \cite[Proposition 3.9]{Yessen2011}), hence if $\epsilon_m$ is sufficiently small, then for all $n\in\widetilde{\tau_{V_m}^*}$ with $\dist_{\tau_{V_m}^*}(n,m) < \epsilon_m$, $\widetilde{\vartheta_n^*}$ intersects $\gamma^*$ in one point and is everywhere tangent to $K_\eta^{\mathrm{ver}}$. Let $L_n^{\mathrm{ver}}$ denote the line segment connecting points $n$ and $\tilde{n}$ -- the point of intersection of $\widetilde{\vartheta_n^*}$ with $\gamma^*$, and $L_n^{\mathrm{hor}}$ the line segment connecting $m$ and $n$. If $n\neq m$ and the distance between $n$ and $m$ is not greater than $\epsilon_m$, by the mean value theorem $L_n^{\mathrm{ver},\mathrm{hor}}$ is tangent to, respectively, $K_\eta^{\mathrm{ver},\mathrm{hor}}$. It follows that $L_n^{\mathrm{ver},\mathrm{hor}}$ is transversal to $\gamma^*$ uniformly in $n$, and hence there exists $\widetilde{C}_m > 0$, such that for all $n\neq m$ whose distance from $m$ is not greater than $\epsilon_m$,
\begin{align}\label{eq:lem-lip2}
\left(\frac{\len(L_n^\mathrm{hor})}{\dist_{\gamma^*}(n, \widetilde{n})}\right)^{\pm 1}\leq \widetilde{C}_m.
\end{align}
Now application of Lemma \ref{lem:curve-ratio} allows to replace $\len(L_n^\mathrm{hor})$ in inequality \eqref{eq:lem-lip2} with the distance between $m$ and $n$, $\dist_{\tau_{V_m}^*}(m, n)$, to obtain \eqref{eq:lem-lip} with
$C_m = M\widetilde{C}_m$, where $M$ is as in Lemma \ref{lem:curve-ratio}. 
\end{proof}
\begin{rem}\label{rem:lip}
The families $\set{\vartheta}$ and $\set{\tau_V}_{V > 0}$ can be parameterized by $n\mapsto \vartheta_n$ and $n\mapsto \tau_n\in\set{\tau_V}$ where $\set{n} = \overline{\gamma^*}\cap \vartheta_n$ and $\set{n} = \overline{\gamma^*}\cap \tau_n$, respectively. In this parameterization, $\vartheta_n$ and $\tau_n$ depend continuously on $n$ in the $C^1$-topology. Hence, by compactness of $\overline{\gamma^*}$, in Lemma \ref{lem:lip} one can choose $\epsilon, C$ independent of $m$.
\end{rem}
Recall that a morphism $H: (M_1, d_1)\rightarrow (M_2,d_2)$ of metric spaces is called H\"older continuous, or simply H\"older, if there exist a \textit{constant} $K > 0$ and \textit{exponent} $\alpha\in (0,1]$ such that for all $x,y\in M_1$, $d_2(H(x),H(y))\leq Kd_1(x, y)^\alpha$.

Denote by $\Gamma$ the intersection of $\overline{\gamma^*}$ with the center-stable manifolds. Denote by $T_V$ the intersection of $\tau_V^*$ with the curves $\set{\vartheta}$.  Let $H_{V_1,V_2}: T_{V_1}\rightarrow T_{V_2}$ be the holonomy map defined by projecting points along the curves $\set{\vartheta}$. Note that $H_{V_1,V_2}$ is a homeomorphism.
\begin{lem}\label{lem:holder}
Let $m\in\Gamma$ with $m\in\tau_{V_m}^*$, $V_m > 0$. Let $h$ be the holonomy map defined in a neighborhood (along $\tau_{V_m}^*$) of $m$ by projecting points from $T_{V_m}$ to $\Gamma$ along the curves $\set{\vartheta}$. Then for every $\alpha\in(0,1)$ there exists $\epsilon_\alpha > 0$ such that the following holds. If $\tau^*$ is a compact arc along $\tau_{V_m}^*$ containing $m$ in its interior and $\len[\tau^*] < \epsilon_\alpha$, then $h|_{T_{V_m}\cap\tau^*}$ and its inverse are H\"older, both with exponent $\alpha$. 
\end{lem}
\begin{proof}[Proof of Lemma \ref{lem:holder}]
Let $C, \epsilon > 0$ be as in Remark \ref{rem:lip}. Let $\epsilon_\alpha > 0$ be so small, that for all $n,n'\in T_{V_m}\cap\tau^*$, $n\neq n'$, the following holds. If $h(n')\in T_V$,  then
\begin{align*}
\dist_{\tau_V^*}(h(n'),H_{V_m, V}(n)) = \dist_{\tau_V^*}(H_{V_m, V}(n'), H_{V_m, V}(n)) < \epsilon.
\end{align*}
Then by Lemma \ref{lem:lip}, we get
\begin{align}\label{eq:holder1}
&\left(\frac{\dist_{\tau_V^*}(h(n'),H_{V_m, V}(n))}{\dist_{\gamma^*}(h(n'),h(n))}\right)^{\pm 1}\\\notag
&= \left(\frac{\dist_{\tau_V^*}(H_{V_m, V}(n'), H_{V_m,V}(n))}{\dist_{\gamma^*}(h(n'), h(n))}\right)^{\pm 1}\leq C.
\end{align}
By \cite[Lemma 4.21]{Yessen2011}, there exist $\delta, K> 0$ such that $V_m - \delta > 0$ and for all $V\in[V_m - \delta, V_m + \delta]$, $H_{V_m, V}$ and its inverse are both H\"older with constant $K$ and exponent $\alpha$. By taking $\epsilon_\alpha$ smaller as necessary, we can ensure that for all $n\in T_{V_m}\cap \tau^*$, if $h(n)\in T_V$, then $V\in[V_m - \delta, V_m + \delta]$. Combining this with \eqref{eq:holder1} completes the proof. 
\end{proof}
Denote by $\underline{\bdim}$ and $\overline{\bdim}$ the lower and upper box-counting dimensions, respectively. Note that $T_V$ is a dynamically defined Cantor set (see \cite[Chapter 4]{Palis1993} for definitions). As a consequence, for every $n\in T_V$,  $\lbdim(n, T_V)$ exists and
\begin{align}\label{eq:prop1}
\bdim(T_V) = \lbdim(n, T_V) = \lhdim(n, T_V) = \hdim(T_V).
\end{align}
As a consequence of \eqref{eq:prop1} and Lemma \ref{lem:holder} we obtain the following. For every $m\in\Gamma\cap T_V$ and $\alpha\in(0,1)$ there exists $\epsilon_{m,\alpha}>0$ such that for any compact arc $\beta^*$ along $\gamma^*$ containing $m$ in its interior and $\len[\beta^*] < \epsilon_{m,\alpha}$, we have
\begin{align}\label{eq:prop2}
\alpha\hdim(T_V)\leq \hdim(\Gamma\cap\beta^*)&\leq \underline{\bdim}(\Gamma\cap\beta^*)\leq \overline{\bdim}(\Gamma\cap\beta^*)\\
&\leq \frac{1}{\alpha}\overline{\bdim}(T_V) = \frac{1}{\alpha}\hdim(T_V)\notag,
\end{align}
where $V$ is such that $x\in T_V$.

Now let $\beta^*$ be any compact arc along $\gamma^*$ containing $m$ in its interior. Let $\alpha\in(0,1)$. Pick a sequence of points $m_1,\dots,m_l$ in $\beta^*\cap \Gamma$, with $m_j\in T_{V_j}$, and partition $\beta^*$ into sub-arcs $\beta_1^*,\dots,\beta_l^*$ such that $m_j\in\beta_j^*$ and, by \eqref{eq:prop2},
\begin{align}\label{eq:prop3}
\alpha\hdim(T_{V_j})\leq \underline{\bdim}(\Gamma\cap\beta_j^*)\leq \overline{\bdim}(\Gamma\cap\beta_j^*)\leq \frac{1}{\alpha}\hdim(T_{V_j}).
\end{align}
Say $\max_{1\leq j\leq l}\set{\overline{\bdim}(\Gamma\cap\beta_j^*)} = \overline{\bdim}(\Gamma\cap\beta_{j_0}^*)$. Then via basic properties of lower and upper box-counting dimensions (see, for example, \cite[Theorem 6.2]{Pesin1997}), we have
\begin{align}\label{eq:prop4}
\overline{\bdim}(\Gamma\cap\beta^*) - \underline{\bdim}(\Gamma\cap\beta^*)& \leq \overline{\bdim}(\Gamma\cap\beta_{j_0}^*) - \max_{1\leq j \leq l}\set{\underline{\bdim}(\Gamma\cap\beta_{j}^*)}\\
&\leq \overline{\bdim}(\Gamma\cap\beta_{j_0}^*) - \underline{\bdim}(\Gamma\cap\beta_{j_0}^*)\notag.
\end{align}
In view of \eqref{eq:prop3}, the right side of \eqref{eq:prop4} can be made arbitrarily small by taking $\alpha$ sufficiently close to one. Hence $\overline{\bdim}(\Gamma\cap\beta^*) = \underline{\bdim}(\Gamma\cap\beta^*)$, and so $\bdim(\Gamma\cap\beta^*)$ exists. This proves the first assertion of the proposition. That local Hausdorff and box-counting dimensions coincide follows from \eqref{eq:prop2}. Hence, by continuity, both local box-counting and local Hausdorff dimensions are maximized simultaneously at some point in the spectrum. This shows equality of global Hausdorff and box-counting dimensions.
\end{proof}
\begin{rem} In the proof above, we assumed that the intersections occur away from the surface $S_0$ (i.e. the assumption in Lemmas \ref{lem:curve-ratio} and \ref{lem:holder} that $V_m > 0$). This need not always be the case; however, if an intersection does occur on $S_0$, then it occurs in a unique point that corresponds to one of the extreme boundaries of the spectrum, and at this point the local Hausdorff dimension is maximal (equals one).
\end{rem}

To complete the proof of Theorem \ref{thm_main1} it is enough to prove, by Proposition \ref{prop:trans}, that for the values $(\mf{p},\mf{q})$ given in the statement of the theorem, the corresponding line of initial conditions intersects the center-stable manifolds transversally. We do this next.

\begin{prop}\label{prop:trans-1}
For all $(\mf{p},\mf{q})\approx (1,0)$ and not equal to $(1,0)$, $\gamma_{(\mf{p},\mf{q})}$ intersects the center-stable manifolds transversally.
\end{prop}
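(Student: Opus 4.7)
The plan is to pass to a planar transversality question inside the plane $\Pi_\mf{p}$ containing $\gamma_{(\mf{p},\mf{q})}$, verify the required condition in the limit $(\mf{p},\mf{q})=(1,0)$ by combining a direct linearization at the fixed points $\pm P=\pm(1,1,1)$ with the known toral-Anosov structure on the regular part of $S_0$, and then conclude via continuity of the center-stable foliation in the transverse $V$-parameter together with smoothness of $\gamma_{(\mf{p},\mf{q})}$ in $(\mf{p},\mf{q})$. Since $\gamma_{(\mf{p},\mf{q})}$ is a straight line in $\Pi_\mf{p}$ and, by \eqref{eq:fvi-deriv}, $\Pi_\mf{p}$ meets the level surfaces $\set{S_V}$ transversally, in a neighborhood of $\overline{\gamma^*}$ every center-stable manifold $W^{\mathrm{cs}}$ is transversal to $\Pi_\mf{p}$ and intersects it in a smooth curve $\vartheta$; transversality of $\gamma$ to $W^{\mathrm{cs}}$ in $\R^3$ is then equivalent to the planar condition that the straight line $\gamma$ and the curve $\vartheta$ have non-parallel tangents at every intersection point.

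The key calculation happens in the limit. At $(\mf{p},\mf{q})=(1,0)$, $\gamma_{(1,0)}(\lambda)=(\lambda/2,\lambda/2,1)$ lies on $S_0$ (by \eqref{eq_fv_value} one has $I\circ\gamma_{(1,0)}\equiv 0$), the relevant compact segment is $\lambda\in[-2,2]$, and its endpoints are the hyperbolic fixed points $\pm P$ of $f$. At $P$, the Jacobian $Df(P)=\bigl(\begin{smallmatrix}2&2&-1\\1&0&0\\0&1&0\end{smallmatrix}\bigr)$ has eigenvalues $u=(3+\sqrt{5})/2$, $s=(3-\sqrt{5})/2$, $-1$ with eigenvectors $v_u=(u^2,u,1)$, $v_s=(s^2,s,1)$, $v_c=(1,-1,1)$; expanding the limit tangent $(1,1,0)$ of $\gamma_{(1,0)}$ in this basis yields $\tfrac{1}{5}v_u+\tfrac{1}{5}v_s-\tfrac{2}{5}v_c$, so $(1,1,0)\notin T_PW^{\mathrm{cs}}=\operatorname{span}(v_s,v_c)$, and the same holds at $-P$. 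For interior $\lambda\in(-2,2)$ I would use that $f|_{S_0}$ is semi-conjugate to the hyperbolic toral automorphism induced by $\bigl(\begin{smallmatrix}1&1\\1&0\end{smallmatrix}\bigr)$ on $\mathbb{T}^2$ via $(a,b)\mapsto(\cos 2\pi a,\cos 2\pi b,\cos 2\pi(a-b))$: the segment of $\gamma^*_{(1,0)}$ is the image of the diagonal $\set{a=b}$, and pushing forward the (constant) stable eigendirection $(1,-\phi)$ of the toral map (with $\phi=(1+\sqrt{5})/2$) yields a stable direction on $S_0$ along this diagonal proportional to $(1,-\phi,0)$, manifestly non-parallel to $(1,1,0)$. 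This exhibits the required transversality uniformly along $\gamma^*_{(1,0)}|_{[-2,2]}$ in the limit.

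To finish, invoke $C^1$-continuity (in fact analyticity) of the center-stable foliation in the transverse $V$-parameter \cite{Llave1986,Cantat2009}, together with smoothness of $(\mf{p},\mf{q})\mapsto\gamma_{(\mf{p},\mf{q})}$ and compactness of $\overline{\gamma^*}$. Transversality is an open condition and has just been verified along the whole compact reference segment in the limit, so it persists in a sufficiently small punctured neighborhood of $(1,0)$. The principal technical obstacle is to make this $V\to 0^+$ continuity precise: the center-stable foliation is a priori defined only for $V>0$, while $S_0$ carries conical singularities at $\pm P$ where the standard hyperbolic splitting degenerates. One must identify the $V\to 0^+$ limit of the center-stable tangent planes with the stable-plus-normal distribution on $S_0$ described above, and establish uniform convergence on the compact region of interest—this is the delicate step underlying the proposition.
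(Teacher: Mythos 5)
Your strategy---reducing to the plane $\Pi_\mathfrak{p}$, checking transversality on $S_0$ directly, and invoking continuity---is on the right track for the \emph{interior} of $\overline{\gamma^*_{(1,0)}}$, and indeed the paper's proof handles the region outside small neighborhoods of the cone points $P_1,P_2$ in essentially this way (via the cone fields $K^\eta_V$ of Lemma~\ref{lem:3dcones} and the proof of \cite[Corollary~4.12]{Yessen2011}). But you have not proved the proposition: you yourself flag ``the delicate step underlying the proposition,'' and that step is not a technical loose end---it is the entire content of the statement. The behavior near $P_1$ and $P_2$ is where all the work lies, and ``$C^1$-continuity of the center-stable foliation in $V$'' does not close the gap, for the following reason. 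As $(\mathfrak{p},\mathfrak{q})\to(1,0)$ the segment $\overline{\gamma^*_{(\mathfrak{p},\mathfrak{q})}}$ has an endpoint $\gamma(E_0)$ on $S_0$ inside a neighborhood of a cone point, and by \eqref{eq:ptrans-3} the transverse component of $\gamma'(E)$ relative to $S_{V(E)}$ scales like $V(E)/(E-E_0)$, which \emph{degenerates} as $E\to E_0$. Meanwhile the center-stable manifolds near $P_1$ accumulate on $W^{\mathrm{cs}}(P_1)$, and the normally hyperbolic structure of Lemma~\ref{lem:3dcones} breaks down (the cone widths are $\eta\sqrt{V}$, shrinking to zero). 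So at a putative intersection point with $V(E)$ small one cannot simply say ``transversality is open and verified at the limit''; one must show quantitatively that the angle does not collapse.

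This is precisely what the paper supplies and your proposal omits. The paper isolates $W^{\mathrm{cs}}(P_1)$ (the center-stable manifold of the normally hyperbolic curve $\mathrm{Per}_2(f)$), shows in Lemma~\ref{lem:prop-help2} that $\gamma_{(\mathfrak{p},\mathfrak{q})}$ meets $W^{\mathrm{cs}}_{\loc}(P_1)$ transversally in a single point $p$ and that no \emph{other} center-stable manifolds meet the arc between $p$ and $\gamma(E_0)$, and then, for intersection points $\gamma(E)$ beyond $p$, pushes the tangent vector forward by the escape time $k$ from $U_1$. Lemma~\ref{lem:prop-help1} and Proposition~\ref{prop:clen} give the crucial quantitative estimate $\|Df^k(\gamma'(E))\|\,|E-E_0|\geq C\|\gamma'(E)\|$, which, combined with $\partial V/\partial E = V(E)/(E-E_0)$ and the gradient bound away from $U$, shows that $Df^k(\gamma'(E))$ lands inside $K^\eta_{V(E)}$ even though the original vector is nearly tangent to the invariant surfaces. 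Without an estimate of this kind---comparing the rate of degeneration of the transverse component of $\gamma'$ against the rate of hyperbolic expansion near the singularity---the soft continuity argument cannot rule out tangencies arbitrarily close to $\gamma(E_0)$. Your computation at $\pm P$ and your toral push-forward on the interior are correct and consistent with the paper's picture, but they establish transversality at $V=0$ only, not the uniform persistence near the cone points for $V>0$ small, which is what the proposition requires.
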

\begin{proof}[Proof of Proposition \ref{prop:trans-1}]
As we recalled above, $S_{V > 0}$ is a two-dimensional non-compact connected analytic submanifold of $\R^3$; $S_0$, however, is smooth everywhere except for four conic singularities: $P_1 = (1,1,1)$, $P_2 = (-1,-1,1)$, $P_3 = (1,-1,-1)$ and $P_4 - (-1,1,-1)$. Let 
\begin{align*}
\mathbb{S} = \set{(x,y,z)\in S_0: \abs{x},\abs{y},\abs{z}\leq 1}.
\end{align*}
Then $\mathbb{S}$ is homeomorphic to the two-sphere and $f(\mathbb{S}) = \mathbb{S}$. Moreover, $f|_{\mathbb{S}}$ is a factor of the hyperbolic automorphism $\mathcal{A} = \bigl(\begin{smallmatrix}1 & 1\\ 1 & 0\end{smallmatrix}\bigr)$ on the two-torus $\mathbb{T}^2$, given by
\begin{align}\label{eq:ptrans-1}
F: (\theta,\phi)\mapsto(\cos2\pi(\theta + \phi), \cos2\phi\theta, \cos2\pi\phi).
\end{align}
Let $U_i$ be a small neighborhood of $P_i$. Set $U = \bigcup_i U_i$. For all $V > 0$ sufficiently small, $S_0\setminus U$ and $S_V\setminus U$ are smooth manifolds (with boundary) consisting of five connected components, one of which is compact; denote the compact component by $\mathbb{S}_{V, U}$. The unstable cone family for $\mathcal{A}$ on $\mathbb{T}^2$ can be carried to $\mathbb{S}_{0, U}$ via $DF$ and extended to all $\mathbb{S}_{V, U}$, for $V$ sufficiently small (see \cite{Damanik2009} for details). Denote this field by $\mathcal{K}_V$. With $V_0$ sufficiently small, define the following cone field on $\bigcup_{0 < V < V_0}\mathbb{S_{V,U}}$:
\begin{align}\label{eq:ptrans-2}
K_V^\eta(x) = \set{(\mathbf{u},\mathbf{v})\in T_x\mathbb{S}_{V, U}\oplus (T_x\mathbb{S}_{V, U})^\perp: \mathbf{u}\in\mathcal{K}_V(x)\text{, }\norm{\mathbf{v}}\leq\eta\sqrt{V}\norm{\mathbf{u}}}.
\end{align}
From \cite{Yessen2011} we have the following
\begin{lem}\label{lem:3dcones}
There exists $\eta > 0$ such that for all $V > 0$ sufficiently small, the cones $\set{K_V^\eta(x)}_{x\in{S}_{V, U}}$ are transversal to the center-stable manifolds.
\end{lem}
Intersections of $\gamma$ with the center-stable manifolds occur on a compact segment along $\gamma^*$, which we denote by $\overline{\gamma^*}$, and which belongs to $\bigcup_{V > 0}S_V$. Set, for convenience, $V(E) = I\circ\gamma(E)$. If $E_0$ denotes the unique value for which $V(E_0) = 0$, then away from $E_0$, from \eqref{eq:fvi} and \eqref{eq:fvi-deriv} we obtain
\begin{align}\label{eq:ptrans-3}
\frac{\partial V(E)}{\partial E}\cdot V(E)^{-1} &= \frac{\mf{q}(1 - \mf{p}^2)}{E\mf{q}(1 - \mf{p}^2) + \mf{q}^2\mf{p}^2 + (\mf{p}^2 - 1)^2} = \frac{1}{E - E_0}\\
&\implies \frac{\partial V(E)}{\partial E} = \frac{1}{E-E_0}V(E).\notag
\end{align}
Notice that $\gamma_{(1,0)}$ passes through $P_1$ and $P_2$, hence application of \cite[Proposition 3.1-(2)]{Yessen2011} shows that for all $(\mf{p},\mf{q})$ sufficiently close to $(1,0)$, intersections of $\gamma$ with the center-stable manifolds occur along $\overline{\gamma^*_{(\mf{p},\mf{q})}}$ that lies entirely inside $U\cup(\cup_{V > 0}\mathbb{S}_{V, U})$. On the other hand, intersection of $\gamma^*$ with $S_0$ occurs inside $U_1\cup U_2$, hence outside of $U$, $\abs{E - E_0}$ is bounded uniformly away from zero. Combining this with the fact that outside of $U$, $\nabla I(x,y,z)$ is bounded uniformly away from zero, using \eqref{eq:ptrans-3} we obtain that for all $(\mf{p},\mf{q})$ sufficiently close to $(1,0)$, $\overline{\gamma_{(\mf{p},\mf{q})}^*}$ is tangent to the cones $K_V^\eta$, with $\eta$ as in Lemma \ref{lem:3dcones}, and hence transversal to the center-stable manifolds (see proof of Corollary 4.12 in \cite{Yessen2011} for details). Therefore, we only need to investigate the situation in the vicinity of $\gamma^*\cap S_0$.
%
%
%
%
%
\begin{figure}[t]
\centering
 \subfigure{
\includegraphics[scale=.25]{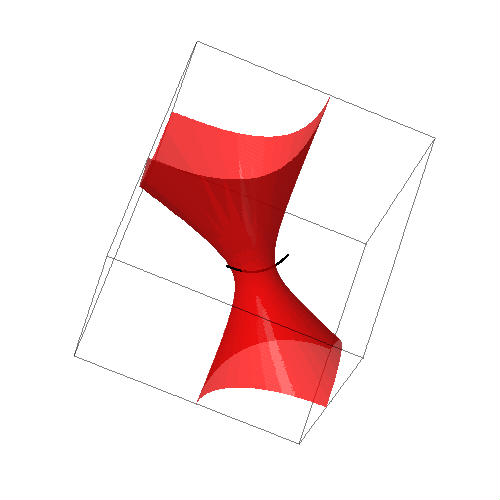}}
 \subfigure{
\includegraphics[scale=.25]{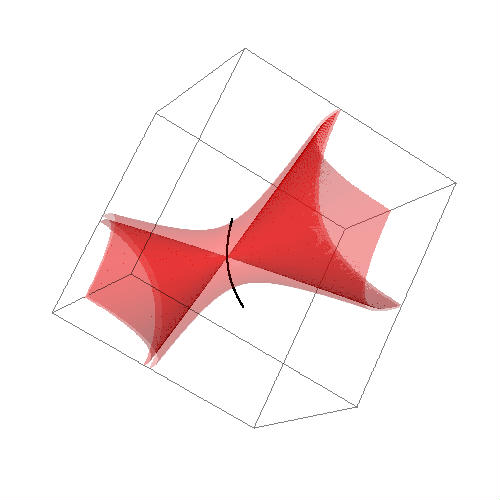}}
\caption{$\mathrm{Per}_2$ in a neighborhood of $P_1$.}
\label{part3_fig1}
\end{figure}

Let us first assume that $\gamma(E_0)\in U_1$. The set of period-two periodic points for $f$ passes through $P_1$ and forms a smooth curve in its vicinity (see Figure \ref{part3_fig1}):
\begin{align}\label{eq:ptrans-5}
\mathrm{Per}_2(f)=\set{(x,y,z): x\in\left(-\infty,1/2\right)\cup\left(1/2,\infty\right),\text{\hspace{2mm}}y=\frac{x}{2x - 1},\text{\hspace{2mm}}z = x}.
\end{align}
This curve is normally hyperbolic, and the stable manifold to this curve, which we denote by $W^\mathrm{cs}(P_1)$, is tangent to $S_0$ along the strong-stable manifold to $P_1$, denoted by $W^\mathrm{ss}(P_1)$ (see \cite{Damanik2009}). Let $O(P_1)$ be a small neighborhood of $P_1$ in $\R^3$ and define
\begin{align}\label{eq:local-mans}
W_\loc^\mathrm{cs}(P_1)&=\set{x\in\R^3: f^n(x)\in O(P_1)\text{ for all }n\in\N};\\
W_\loc^\mathrm{ss}(P_1)&=\set{x\in\ W_\loc^\mathrm{cs}(P_1): f^n(x)\rightarrow P_1\text{ as }n\rightarrow\infty}\notag.
\end{align}
The manifolds $W_\loc^\mathrm{cs}(P_1)$ and $W_\loc^\mathrm{ss}(P_1)$ are neighborhoods of $P_1$ in $W_\loc^\mathrm{cs}(P_1)$ and $W_\loc^\mathrm{cs}(P_1)$, respectively, contained in $O(P_1)$. The manifolds $W^\mathrm{cs}(P_1)$ and $W^\mathrm{ss}(P_1)$ are injectively immersed two- and one-dimensional submanifolds of $\R^3$, respectively. The manifold $W^\mathrm{ss}(P_1)$ consists of two smooth branches, one injectively immersed in $\mathbb{S}\setminus\set{P_1,\dots,P_4}$, the other in the cone of $S_0$ attached to $P_1$ (see Figure \ref{part2_fig1}), and these two branches connect smoothly at $P_1$.
\begin{lem}\label{lem:prop-help2}
For all $(\mf{p}, \mf{q})$ sufficiently close to $(1,0)$, $\gamma_{(\mf{p},\mf{q})}$ intersects $W_\loc^\mathrm{cs}(P_1)$ transversally in a unique point, call it $p$. The arc along $\gamma_{(\mf{p},\mf{q})}^*$ connecting $p$ and $\gamma_{(\mf{p},\mf{q})}(E_0)$ does not intersect the center-stable manifolds other than at $p$, where $E_0$ is the unique point such that $\gamma_{(\mf{p},\mf{q})}(E_0)\in S_0$.
\end{lem}
\begin{proof}[Proof of Lemma \ref{lem:prop-help2}]
The tangent space to $W^\mathrm{ss}(P_1)$ at $P_1$ is spanned by the eigenvector of $Df$ corresponding to the largest eigenvalue. After a simple computation, we get that 
\begin{align*}
T_{P_1}W^\mathrm{ss}(P_1)\oplus 
T_{P_1}\mathrm{Per}_2(f)\oplus 
T_{P_1}\gamma_{(1,0)}^* = \R^3.
\end{align*}
Hence $\gamma^*{(1,0)}$ intersects $W_\loc^\mathrm{cs}(P_1)$ transversally at the unique point $P_1$. Since $W_\loc^\mathrm{cs}(P_1)$ is a two-dimensional disc embedded in $\R^3$, all sufficiently small $C^1$ perturbations of $\gamma^*_{(1,0)}$ intersect $W_\loc^\mathrm{cs}(P_1)$ transversally in a unique point; this is true in particular for all $\gamma^*_{(\mf{p},\mf{q})}$ with $(\mf{p},\mf{q})\approx (1,0)$. 

Let $\mathcal{C}_{P_1}$ denote the cone of $S_0$ attached to $P_1$. If the arc connecting $p$ and $\gamma(E_0)$ intersects center-stable manifolds at points other than $p$, then the intersection of these center-stable manifolds with $\mathcal{C}_{P_1}$ will form a lamination of a neighborhood of $P_1$ in $\mathcal{C}_{P_1}$ consisting of uncountably many disjoint one-dimensional embedded submanifolds of $\mathcal{C}_{P_1}$, each point of which has bounded forward semi-orbit under $f$. On the other hand, a point in $\mathcal{C}_{P_1}$ has bounded forward semi-orbit if and only if it lies in $\widetilde{W}^\mathrm{ss}(P_1)$, the branch of $W^\mathrm{ss}(P_1)$ lying in $\mathcal{C}_{P_1}$ (this follows from general principles); hence this lamination must consist of pieces of $\widetilde{W}^\mathrm{ss}(P_1)$. Let $\widetilde{W}_\loc^\mathrm{ss}(P_1)$ denote the branch of $W_\loc^\mathrm{ss}(P_1)$ lying on $\mathcal{C}_{P_1}$. Then $\widetilde{W}^\mathrm{ss}(P_1) = \bigcup_{n\in\N}f^{-n}(\widetilde{W}_\loc^\mathrm{ss}(P_1))$. On the other hand, since the points of $S_0$ whose full orbit is bounded belong to $\mathbb{S}$, every point of $\widetilde{W}_\loc^\mathrm{ss}(P_1)$, not including $P_1$, must diverge under iterations of $f^{-1}$. Now, $f^{-1}(x,y,z) = (y,z,2yz-x) = \sigma\circ f\circ\sigma$, where $\sigma: (x,y,z)\mapsto (z,y,x)$ (see \cite{Baake1997} for more details on reversing symmetries of trace maps). Hence the results of \cite{Roberts1996} apply: unbounded backward semi-orbits under $f$ escape to infinity. It follows that pieces of $\widetilde{W}^\mathrm{ss}(P_1)$ cannot form the aforementioned lamination.
\end{proof}
\begin{prop}\label{prop:clen}
If $U_1$ is taken sufficiently small, then there exist $N_0\in\N$ and $C > 0$ such that the following holds. If $E$ is such that $\gamma(E)$ does not lie on the arc connecting $\gamma(E_0)$ and $p$ (with $p$ as in the previous lemma), and the arc connecting $\gamma(E)$ and $p$, which we denote by $\beta$, lies entirely in $U_1$, and if $k\in\N$ is the smallest number such that $f^k(\beta)\cap U_1^c\neq \emptyset$, then if $k \geq N_0$, we have $\norm{Df^k(\gamma'(E))}\abs{E-E_0}\geq C\norm{\gamma'(E)}$.
\end{prop}
\begin{proof}[Proof of Proposition \ref{prop:clen}]
Assuming $O(P_1)$ is taken sufficiently small, take a diffeomorphism $\Phi: O(P_1)\rightarrow \R^3$ such that
\begin{itemize}
\item $\Phi(P_1) = (0,0,0)$;
\item $\Phi(\mathrm{Per}_2(f))$ is part of the line $\set{x = 0,\text{\hspace{1mm}}z = 0}$;
\item $\Phi(W_\loc^\mathrm{cs}(P_1))$ is part of the plane $\set{z = 0}$.
\end{itemize}
Assume also that $\overline{U_1}\subset O(P_1)$. 
\begin{lem}\label{lem:prop-help1}
There exist $\lambda > 1$, $C^* > 0$, $C^{**} > 0$, and for every $\eta > 0$ there exist $C_1 > 0$ and $N_0\in\N$ such that the following holds. Define
\begin{align}\label{eq:large-cones}
\mathcal{K}^\eta = \set{(x,y,z) = (u,v)\in\R^2\oplus\R: \norm{v}\leq\eta\norm{u}},
\end{align}
and let $\widetilde{f} = \Phi\circ f\circ \Phi^{-1}$.
\begin{enumerate}[i.]
\item For all $x\in\Phi(U_1)$, if $k\in\N$ is such that $\widetilde{f}^{k-1}(x)\in \Phi(U_1)$, $\widetilde{f}^k(x)\notin \Phi(U_1)$ and $k\geq N_0$, then for any $v\in T_x\R^3$ with $v\in\mathcal{K}^\eta$, $\norm{D\widetilde{f}^k(v)}\geq C_1\lambda^k\norm{v}$.
\item If $x_z$ denotes the $z$-component of $x$, then $C^*\lambda^{-k}\leq x_z \leq C^{**}\lambda^{-k}$.
\end{enumerate}
\end{lem}
\begin{proof}[Proof of Lemma \ref{lem:prop-help1}]
For the first assertion, one needs to notice that the cones in \eqref{eq:large-cones}, unlike those defined in \cite[Proposition 3.15]{Damanik2010}, have fixed width. This allows us to replace the inequality $\norm{D\widetilde{f}^k(v)}\geq C_1\lambda^{k/2}$ in \cite[Proposition 3.15]{Damanik2010} with $\norm{D\widetilde{f}^k(v)}\geq C_1\lambda^{k}$.

The second assertion is a restatement of \cite[Proposition 3.14]{Damanik2010}.
\end{proof}
Let $m$ be a point in $\Phi(\beta)$ such that $\widetilde{f}^k(m)\notin \Phi(U_1)$, $\widetilde{f}^{k-1}(m)\in\Phi(U_1)$. Let $\widetilde{\beta}$ denote the arc along $\Phi(\beta)$ connecting $m$ and $\Phi(p)$. We have 
\begin{align*}
0 < m_z\leq\len[\Phi(\widetilde{\beta})]\leq\len[\Phi(\beta)].
\end{align*}
Let $v\in T_{m}\R^3$ with $v\in \mathcal{K}^\eta$. Application of Lemma \ref{lem:prop-help1} gives 
\begin{align*}
\norm{D\widetilde{f}^k(v)}m_z\geq C_1C^*\norm{v}.
\end{align*}
Hence we have $\norm{D\widetilde{f}^k(v)}\len[\Phi(\beta)]\geq C_1C^*\norm{v}$. On the other hand, since, by Lemma \ref{lem:prop-help2}, $\gamma^*_{(\mf{p},\mf{q})}$ is uniformly transversal to $W_\loc^\mathrm{cs}(P_1)$ for all $(\mf{p},\mf{q})$ sufficiently close to $(1,0)$, for $U_1$ sufficiently small there exists $\eta > 0$ such that for all $(\mf{p},\mf{q})\approx (1,0)$, $\Phi(\gamma^*_{(\mf{p},\mf{q})}\cap U_1)$ is tangent to $\mathcal{K}^\eta$. This completes the proof.
\end{proof}
\begin{rem}\label{rem:better-clen}
The bound $C\norm{\gamma'(E)}$ in the conclusion of Proposition \ref{prop:clen} can be replaced with a constant, say $\widetilde{C}$, since for all $(\mf{p},\mf{q})$ with $\mf{p}$ uniformly away from zero, $\norm{\gamma'(E)}$ is uniformly away from infinity (see \eqref{gamma_defn}).
\end{rem}
Let $U_i^*$ be a neighborhood of $P_i$ such that for all $m\in U_1^*$, if $f^k(m)\notin U_1$, then $k > N_0$, with $N_0$ as in Proposition \ref{prop:clen}. For all $(\mf{p},\mf{q})$ sufficiently close to $(1,0)$, $\overline{\gamma^*_{(\mf{p},\mf{q})}}$, the compact line segment along $\gamma^*_{(\mf{p},\mf{q})}$ on which intersections with center-stable manifolds occur, has its endpoints inside $U_1^*\cup U_2^*$. If $E\in\R$ is such that $\gamma_{(\mf{p},\mf{q})}(E)\in U_1^*$ is a point of intersection with a center-stable manifold, and if for all $k$, $f^k(\gamma_{(\mf{p},\mf{q})}(E))\in U_1$, then $\gamma_{(\mf{p},\mf{q})}(E)\in W_\loc^\mathrm{cs}(P_1)$, hence $\gamma_{(\mf{p},\mf{q})}(E)$ coincides with $p$ of Lemma \ref{lem:prop-help2}, and this intersection is transversal. Otherwise, say $k\in\N$ is such that $f^k(\gamma_{(\mf{p},\mf{q})}(E))\notin U_1$ and $f^{k-1}(\gamma_{(\mf{p},\mf{q})}(E))\in U_1$. We have
\begin{align*}
\norm{\proj{(T_{\gamma_{(\mf{p},\mf{q})}(E)}S_{V(E)})^\perp}{\gamma'_{(\mf{p},\mf{q})}(E)}} = \frac{\partial V(E)}{\partial E} \nabla I(\gamma_{(\mf{p},\mf{q})}(E))^{-1}
\end{align*}
(recall: $V(E) = I\circ\gamma(E)$). On the other hand, by \cite[Lemma 4.9]{Yessen2011} we have
\begin{align*}
&\norm{\proj{(T_{f^k(\gamma_{(\mf{p},\mf{q})}(E))}S_{V(E)})^\perp}{\gamma'_{(\mf{p},\mf{q})}(E)}}\\
&= \frac{\nabla I(\gamma_{(\mf{p},\mf{q})}(E))}{\nabla I(f^k(\gamma_{(\mf{p},\mf{q})}(E)))}\norm{\proj{(T_{\gamma_{(\mf{p},\mf{q})}(E)}S_{V(E)})^\perp}{\gamma'_{(\mf{p},\mf{q})}(E)}}.
\end{align*}
Hence we obtain
\begin{align*}
\norm{\proj{(T_{f^k(\gamma_{(\mf{p},\mf{q})}(E))}S_{V(E)})^\perp}{\gamma'_{(\mf{p},\mf{q})}(E)}} &= \frac{\partial V(E)}{\partial E}\nabla I(f^k(\gamma_{(\mf{p},\mf{q})}(E)))^{-1}\\
&\leq \frac{1}{D}\frac{\partial V(E)}{\partial E},
\end{align*}
where $D > 0$ is the lower bound of the gradient of $I$ restricted to $\mathbb{S}_{V, U}$. Therefore,
\begin{align*}
&\norm{\proj{(T_{f^k(\gamma_{(\mf{p},\mf{q})}(E))}S_{V(E)})^\perp}{\gamma'_{(\mf{p},\mf{q})}(E)}}\left(\norm{Df^k(\gamma'_{(\mf{p},\mf{q})}(E))}V(E)\right)^{-1}\\
&\leq \frac{1}{D}\frac{\partial V(E)}{\partial E}\left(\norm{Df^k(\gamma'_{(\mf{p},\mf{q})}(E))}V(E)\right)^{-1}\\
&= \frac{1}{D\norm{Df^k(\gamma'_{(\mf{p},\mf{q})}(E))}\abs{E - E_0}}\leq \frac{1}{D\widetilde{C}}
\end{align*}
(the last equality follows from \eqref{eq:ptrans-3}), where $\widetilde{C}$ is as in Remark \ref{rem:better-clen}. Finally, with \eqref{eq:ptrans-3} in mind, we obtain
\begin{align*}
\norm{\proj{(T_{f^k(\gamma_{(\mf{p},\mf{q})}(E))}S_{V(E)})^\perp}{\gamma'_{(\mf{p},\mf{q})}(E)}}\norm{Df^k(\gamma'_{(\mf{p},\mf{q})}(E))}^{-1}\leq \frac{1}{D\widetilde{C}}V(E).
\end{align*}
Hence if $V(E)$ is small (i.e., for all $(\mf{p},\mf{q})$ sufficiently close to $(1,0)$), $Df^k(\gamma'_{(\mf{p},\mf{q})})$ is tangent to the cone $K_{V(E)}^\eta$, with $\eta$ as in Lemma \ref{lem:3dcones}. By invariance of the center-stable manifolds under $f$ and Lemma \ref{lem:3dcones} it follows that the intersection of $\gamma_{(\mf{p},\mf{q})}^*$ with center-stable manifold at $\gamma_{(\mf{p},\mf{q})}(E)$ is transversal. Thus, for all $(\mf{p},\mf{q})$ sufficiently close to $(1,0)$, if  $\gamma_{(\mf{p},\mf{q})}(E_0)\in U_1$, then $\gamma_{(\mf{p},\mf{q})}^*$ intersects the center-stable manifolds transversally inside $U_1^*$. An argument similar to the one above, with $U^* = \bigcup_i U_i^*$ in place of $U$, shows that outside of $U^*$ the intersections are also transversal. It remains to investigate the case when $\gamma_{(\mf{p},\mf{q})}(E_0)\in U_2$.

In case $\gamma_{(\mf{p},\mf{q})}(E_0)\in U_2$, we can reduce everything to the previous case as follows. Replace, without loss of generality, $f$ with $f^3$. Let $\sigma: (x,y,z)\mapsto(-x,-y,z)$. Notice that $\sigma$ is simply rotation in the $xy$-plane around the origin by $\pi$, $\sigma$ preserves $S_V$ for all $V$, $f^3 = \sigma^{-1}\circ f^3\circ\sigma = \sigma\circ f^3\circ \sigma$, and $\sigma$ maps $P_1$ to $P_2$. Essentially, all of this guarantees that one can rotate the line $\gamma^*$ by $\pi$ in the $xy$-plane while keeping all other geometric objects invariant (i.e. the level surfaces $S_V$ as well as center-stable manifolds), thus reducing everything to the previous case.

The proof of Proposition \ref{prop:trans-1} is complete.
\end{proof}
\begin{prop}\label{prop:trans-2}
There exists $\Delta > 0$ such that for all $\mf{p}$ satisfying $\abs{\mf{p} - 1} > \Delta$ and all $\mf{q}$ satisfying $\abs{\mf{q}} > \Delta$, there exist $\delta_{\mf{p}}, \delta_{\mf{q}}>0$, such that for all $\alpha$ in the interval $(1-\delta_{\mf{p}}, 1 + \delta_{\mf{p}})$ and $\beta\in(-\delta_{\mf{q}},\delta_{\mf{q}})$, $\gamma_{(\alpha,\mf{q})}^*$ and $\gamma_{(\mf{p},\beta)}^*$ intersect the center-stable manifolds transversally.
\end{prop}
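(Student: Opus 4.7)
The plan is to reduce Proposition \ref{prop:trans-2} to the two already-understood ``pure'' configurations and then extend by an openness argument. A direct computation from \eqref{gamma_defn} yields the key simplifications
\begin{align*}
I\circ\gamma_{(1,\mf{q})}(E) \equiv \frac{\mf{q}^2}{4}\qquad\text{and}\qquad I\circ\gamma_{(\mf{p},0)}(E)\equiv \frac{(\mf{p}^2-1)^2}{4\mf{p}^2},
\end{align*}
both independent of $E$. Consequently $\gamma_{(1,\mf{q})}^*$ and $\gamma_{(\mf{p},0)}^*$ each lie entirely on a single invariant surface $S_V$; these are precisely the lines of initial conditions for the pure diagonal Fibonacci Hamiltonian with coupling $\mf{q}$ and for the pure off-diagonal Fibonacci Hamiltonian with coupling $\mf{p}$, respectively.

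For these pure models with sufficiently large coupling $|\mf{q}|$ (resp.\ $|\mf{p}-1|$) one has transversality of the line of initial conditions with the stable manifolds of the hyperbolic set $\Omega_V$ inside $S_V$, established by cone-field arguments in \cite{Damanik2009} (diagonal case) and in \cite[Appendix A]{Damanik2010} (off-diagonal case). I would then upgrade this intrinsic transversality within $S_V$ to transversality in $\R^3$ with the corresponding center-stable manifold. Since $\gamma^*$ is tangent to $S_V$ in the pure case, while each center-stable manifold is tangent to $S_V$ only along its one-dimensional stable manifold and otherwise has a direction transverse to $S_V$, the two notions coincide. Choosing $\Delta$ sufficiently large therefore furnishes uniform transversality at the base parameters $(1,\mf{q})$ and $(\mf{p},0)$.

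For the perturbation step, $\gamma_{(\mf{p},\mf{q})}$ depends smoothly on $(\mf{p},\mf{q})$ through \eqref{gamma_defn}, and the compact arc $\overline{\gamma_{(\mf{p},\mf{q})}^*}$ on which intersections with center-stable manifolds can occur varies continuously in the parameters. Combined with the $C^1$-continuity of the family of center-stable manifolds on compact sets (\cite[Proposition 3.9]{Yessen2011}), uniform transversality along $\overline{\gamma^*}$ at the base parameters propagates, by openness of the transversality condition together with the compactness of $\overline{\gamma^*}$, to an open neighborhood in parameter space. This delivers the required $\delta_\mf{p}$ and $\delta_\mf{q}$.

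The main obstacle is to ensure that the perturbed curves do not approach the singular surface $S_0$, where the delicate analysis near conic singularities carried out in the proof of Proposition \ref{prop:trans-1} would have to be re-invoked. This is precisely why $\Delta$ must be taken large: both $\mf{q}^2/4$ and $(\mf{p}^2-1)^2/(4\mf{p}^2)$ are then bounded below by a fixed positive constant, so by continuity $I\circ\gamma_{(\alpha,\mf{q})}$ and $I\circ\gamma_{(\mf{p},\beta)}$ remain uniformly positive along $\overline{\gamma^*}$ for all $\alpha,\beta$ in the asserted intervals. All intersections then occur in the smooth regime $\{V>0\}$, where the hyperbolic structure is well-controlled and the openness argument applies without further modification.
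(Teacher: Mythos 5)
Your proposal is correct and takes essentially the same route as the paper: it uses the energy-independence of $I\circ\gamma$ in the two pure cases, invokes the known large-coupling transversality for the diagonal and off-diagonal lines of initial conditions (the paper cites Casdagli \cite{Casdagli1986} and Dahl \cite{Dahl2010}, while you cite \cite{Damanik2009} and \cite[Appendix A]{Damanik2010}, which is a cosmetic difference), upgrades intrinsic transversality in $S_V$ to transversality in $\R^3$ via \cite[Proposition 3.9]{Yessen2011}, and finally propagates by openness of transversality along the compact arc $\overline{\gamma^*}$. Your explicit remark on why $\Delta$ must be large --- to keep $\overline{\gamma^*}$ away from $S_0$ so the smooth hyperbolic picture suffices --- is implicit in the paper and is a correct and clarifying addition.
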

\begin{proof}[Proof of Proposition \ref{prop:trans-2}]
Following Casdagli's result in \cite{Casdagli1986} combined with \cite[Proposition 3.9]{Yessen2011}, we have: for all $\mf{q}$ with $\abs{\mf{q}}$ sufficiently large, $\gamma_{(1,\mf{q})}^*$ intersects the center-stable manifolds transversally, and this intersection occurs on a compact segment along $\gamma_{(1,\mf{q})}^*$. Hence all sufficiently small perturbations of $\gamma_{(1,\mf{q})}^*$ intersect the center-stable manifolds transversally.

Similarly, combination of results in \cite{Dahl2010}  with \cite[Proposition 3.9]{Yessen2011} shows that for all $\mf{p}$ with $\abs{\mf{p}-1}$ sufficiently large, $\gamma_{(\mf{p},0)}^*$ intersects the center-stable manifolds transversally, so again all sufficiently small perturbations of $\gamma_{(\mf{p},0)}^*$ also intersect the center-stable manifolds transversally.
\end{proof}
Combination of Propositions \ref{prop:trans}, \ref{prop:trans-1} and \ref{prop:trans-2} gives the proof of Theorem \ref{thm_main1}.
%
%
%
\begin{figure}[t]
\centering
\setlength{\unitlength}{0.5mm}
\begin{picture}(120,120)

\put(10,10){\framebox(100,100)}

\put(5,3){(0,0)}

\put(50,3){(0, 1/2)}

\put(102,3){(1,0)}

\put(5,113){(0,1)}

\put(102,113){(1,1)}

\put(8,8){$\bullet$}

\put(108,8){$\bullet$}

\put(58,8){$\bullet$}

\put(8,108){$\bullet$}

\put(108,108){$\bullet$}

\put(10,10){\line(161,100){36.2}}

\put(60,10){\line(-61,100){13.8}}

\put(40,16){$1$}

\linethickness{0.5mm}

\put(11,10){\line(1,0){50}}

\linethickness{0.075mm}

\put(60,10){\line(161,100){36.2}}

\put(110,10){\line(-61,100){13.8}}

\put(90,16){$1$}

\put(60,110){\line(-161,-100){36.2}}

\put(10,110){\line(61,-100){13.8}}

\put(25,100){$1$}

\put(110,110){\line(-161,-100){36.2}}

\put(60,110){\line(61,-100){13.8}}

\put(75,100){$1$}

\put(10,60){\line(61,-100){22.1}}

\put(18,28){$5$}

\put(10,60){\line(161,100){22.1}}

\put(10,110){\line(61,-100){22.1}}

\put(18,78){$5$}

\put(110,60){\line(-61,100){22.1}}

\put(102,38){$5$}

\put(110,60){\line(-161,-100){22.1}}

\put(110,10){\line(-61,100){22.1}}

\put(102,88){$5$}

\put(60,60){\line(-161,-100){36.3}}

\put(60,60){\line(161,100){36.3}}

\put(60,60){\line(-61,100){22.1}}

\put(60,60){\line(61,-100){22.1}}

\put(88,46.35){\line(-161,-100){14.3}}

\put(32,73.65){\line(161,100){14.3}}

\put(37.88,46.28){\line(61,-100){8.4}}

\put(82.12,73.72){\line(-61,100){8.4}}

\put(34,34){$4$}

\put(84,84){$4$}

\put(34,84){$3$}

\put(84,34){$3$}

\put(34,60){$2$}

\put(84,60){$2$}

\put(60,34){$6$}

\put(60,84){$6$}

\end{picture}
\caption{The Markov partition for
$T|_{\mathbb{S}}$ (picture taken from \cite{Damanik2009}).}\label{fig:Casdagli-Markov}
\end{figure}
%
%
%

\subsection{Proof of theorem \ref{thm_main2}}\label{sec:proof_main2}

For the existence of the limit in \eqref{eq:thm_main2-1}, it is enough to prove the following
\begin{prop}\label{prop:thm-main2}
There exists $C > 0$ and for every $n\in\N$ there exists  a subset $U_n$ of $\Sigma_{(\mf{p},\mf{q})}$ of full $d\mathcal{N}$-measure, such that for all $E\in U_n$, we have
\begin{align}\label{prop:thm-main2-eq1}
\limsup_{\epsilon\downarrow 0}\frac{\log\mathcal{N}(E-\epsilon, E+\epsilon)}{\log\epsilon}\leq C,
\end{align}
with $C$ independent of $n$, and
\begin{align}\label{prop:thm-main2-eq2}
\limsup_{\epsilon\downarrow 0}\frac{\log\mathcal{N}(E-\epsilon, E+\epsilon)}{\log\epsilon} - \liminf_{\epsilon\downarrow 0}\frac{\log\mathcal{N}(E-\epsilon, E+\epsilon)}{\log\epsilon} \leq \frac{1}{n}.
\end{align}
\end{prop}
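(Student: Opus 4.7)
The plan is to reduce the existence of the pointwise dimension of $d\mathcal{N}$ at $E$ to the existence of the Lyapunov exponent
$$L(E) := \lim_{k\to\infty} \frac{1}{k}\log\|\widehat{T}_k(E)\|,$$
and then to verify this limit exists $d\mathcal{N}$-almost everywhere by transferring the problem to the hyperbolic dynamics of $f$ on the invariant surfaces $S_V$. First, using \eqref{the_spectrum}, \eqref{eq_disc_to_trace}, and standard one-dimensional Floquet band geometry, one establishes $|B_k(E)| \asymp \|\widehat{T}_k(E)\|^{-1}$ (up to sub-exponential factors), where $B_k(E)$ is the band of $\sigma_k$ nearest to $E$. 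The periodic density of states $d\mathcal{N}_k$ assigns mass $F_k^{-1}$ to each of its $F_k$ bands; combining $d\mathcal{N}_k \rightharpoonup d\mathcal{N}$ with the Hausdorff convergence $\sigma_k \to \Sigma_{(\mf{p},\mf{q})}$ from Remark \ref{rem:alt-proof} yields $\mathcal{N}(E-\epsilon, E+\epsilon) \asymp F_{k(\epsilon)}^{-1}$, where $k(\epsilon)$ is chosen so that $|B_{k(\epsilon)}(E)| \asymp \epsilon$. Since $F_k \asymp \alpha^k$ with $\alpha$ the golden mean, this gives
$$\lim_{\epsilon\downarrow 0}\frac{\log\mathcal{N}(E-\epsilon, E+\epsilon)}{\log\epsilon} = \frac{\log\alpha}{L(E)}$$
whenever the right-hand side is defined.

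Second, I would establish existence and uniform positivity of $L(E)$ for $d\mathcal{N}$-a.e. $E$ via the hyperbolic structure of the trace map. For $E \in \Sigma_{(\mf{p},\mf{q})}$ the orbit $\mathcal{O}_f^+(\gamma(E))$ is bounded on $S_{V(E)}$; by the recursion $\widehat{T}_{k+2} = \widehat{T}_{k+1}\widehat{T}_k$ and the trace-map/transfer-matrix correspondence, growth of $\|\widehat{T}_k(E)\|$ is governed by the unstable Lyapunov exponent of $f|_{\Omega_{V(E)}}$ at the orbit shadowed by $\mathcal{O}_f^+(\gamma(E))$. The density of states $d\mathcal{N}$, being the weak limit of equidistributions on the $F_k$ band centers of $\sigma_k$, corresponds via the Hölder holonomy of Lemma \ref{lem:holder} along $\gamma^*$ into local transversals in each $S_V$ to the measure of maximal entropy $\mu_V$ on $\Omega_V$ (characterized dually by equidistribution of $F_k$-periodic points of $f|_{\Omega_V}$). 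Birkhoff's ergodic theorem applied to $(f|_{\Omega_V}, \mu_V)$ then yields existence of $L(E)$ $d\mathcal{N}$-a.e., and uniform hyperbolicity of $f$ on $\Omega_V$ for $V$ in the compact range attained by $V(E) = I \circ \gamma(E)$ on $\overline{\gamma^*}$ gives a uniform lower bound $L(E) \geq L_0 > 0$.

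Finally, letting $U_\infty$ denote the full-$d\mathcal{N}$-measure set on which $L(E)$ exists and setting $U_n := U_\infty$ for every $n$, inequality \eqref{prop:thm-main2-eq2} is the trivial statement $0 \leq 1/n$, and \eqref{prop:thm-main2-eq1} follows with $C := \log\alpha / L_0$, independent of $n$. The principal obstacle is the identification of $d\mathcal{N}$ with the fiber-wise collection $\set{\mu_V}$: the density of states lives globally on the spectrum while the hyperbolic structure is stratified by the continuously varying parameter $V(E)$. The cleanest route is combinatorial, matching $F_k$ band centers of $\sigma_k$ near $E$ with the $F_k$-periodic points of $f|_{\Omega_{V(E)}}$ shadowing $\gamma(E)$ and showing that the equal-mass $F_k^{-1}$ assignment on each side corresponds in the limit; this in turn rests on a uniform shadowing/hyperbolic continuation argument for $V$ ranging over compact subintervals of $(0,\infty)$, together with an Egorov-type upgrade to produce the index-dependent sets $U_n$ explicitly if the limit approach proves too coarse.
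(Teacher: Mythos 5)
Your proposal takes a genuinely different route from the paper, but it has a gap that is not a technicality: it presupposes exactly the hard result the proposition is designed to circumvent.

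The paper's strategy is to \emph{avoid} proving exact pointwise dimensionality of $d\mathcal{N}$ directly. Instead it pulls $d\mathcal{N}$ back, via the holonomy $\mathcal{H}$ along the center-stable lamination $W^{\mathrm{cs}}(P_1)$, to the pure-Hamiltonian density of states $d\mathcal{N}_0$ (Claim \ref{claim:prop-help1}), and then transfers the exact-dimensionality result of \cite{Damanik2011} from a reference leaf $\tau_m^*\subset S_{V_m}$ to $\gamma^*$ using the holonomy $h$ of Lemma \ref{lem:holder}. That holonomy is only $\alpha$-H\"older (not Lipschitz), which is precisely why the proposition is formulated with a $1/n$-oscillation rather than an exact limit: one can only squeeze the limsup and liminf within $(\alpha^{-1}-\alpha)\sup d(m)$ of each other, and the exact limit is recovered only afterwards by intersecting the $U_n$. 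Your final step, ``set $U_n=U_\infty$ for every $n$,'' trivializes \eqref{prop:thm-main2-eq2} only if you have already proven that the exact limit exists $d\mathcal{N}$-a.e. --- i.e., the conclusion of Theorem \ref{thm_main2}. You are assuming the endpoint of the whole argument.

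The specific place the assumption enters is the step you yourself flag as ``the principal obstacle'': identifying $d\mathcal{N}$ with the fibered family $\set{\mu_V}$ of measures of maximal entropy. In the diagonal and off-diagonal models the initial segment $\overline{\gamma^*}$ lies in a single leaf $S_V$, so $d\mathcal{N}$ can be compared to a single $\mu_V$ and the argument of \cite{Damanik2011} closes. In the tridiagonal model $V(E)=I\circ\gamma(E)$ varies with $E$ (see \eqref{eq:fvi}, \eqref{eq:fvi-deriv}), so $d\mathcal{N}$ is spread across a continuum of leaves; there is no single hyperbolic system $(f|_{\Omega_V},\mu_V)$ to which Birkhoff applies. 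Your sketch --- matching $F_k$ band centers to $F_k$-periodic points of $f|_{\Omega_{V(E)}}$ and invoking uniform shadowing over compact $V$-ranges --- is plausible as a program, but it is substantively harder than what the paper does, and it would need to produce a Lipschitz (not merely H\"older) correspondence to give the exact limit directly; no such regularity is available for center-stable holonomies. Also note that the quantity $L(E)$ you define must be understood as a renormalized exponent per trace-map iterate (i.e., with $\widehat{T}_k$ a product of $F_k$ one-step matrices and normalization $1/k$); the ordinary Lyapunov exponent $\lim\frac1n\log\|T_n\cdots T_1\|$ vanishes on the spectrum (this is the Kotani/Iochum--Testard fact used in the proof of Theorem \ref{thm_first_result}), so the naive formula $\log\alpha/L(E)$ must be stated with care or it is vacuous. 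In short: the route through Lyapunov exponents and measures of maximal entropy is not wrong as a heuristic, but as written it outsources the entire proof to a claim that is neither trivial nor cited, and it discards the structural reason the proposition is stated with a $1/n$-error.
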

\begin{proof}[Proof of Proposition \ref{prop:thm-main2}]
Transversal intersection of $\gamma_{(\mf{p},\mf{q})}^*$ with the center-stable manifolds will be the main ingredient for us; however, we have proved transversality in only special cases. On the other hand, we know that tangential intersections, if such exist, occur at no more than finitely many points. Since $d\mathcal{N}$ is non-atomic and our results are stated modulo a set of measure zero, we may exclude those points. We also exclude the extreme upper and lower boundary points of the spectrum, as these may correspond to intersection of $\gamma_{(\mf{p},\mf{q})}^*$ with $S_0$; while this doesn't present great complications, it is certainly more convenient to work away from $S_0$.

For what follows, the interested reader should see \cite{Damanik2011} for technical details where we omit them. 

Under $\gamma: \R\rightarrow\R^3$ from \eqref{gamma_defn}, the spectrum for the pure Hamiltonian, $\Sigma_{(1,0)}$, corresponds to the line in $\R^3$ connecting the points $P_1$ and $P_2$. Following the convention that we've established above, call this line segment $\overline{\gamma_{(1,0)}^*}$. A Markov partition for $\mathcal{A}$ on $\T^2$ is shown in Figure \ref{fig:Casdagli-Markov}. The preimage of $\overline{\gamma_{(1,0)}^*}$ under $F$ from \eqref{eq:ptrans-1} is the line segment $l\equiv [0,1/2]\times\set{0}$ in $\T^2$ (i.e. the segment connecting $(0,0)$ and $(0,1/2)$ in Figure \ref{fig:Casdagli-Markov}). Let $\mathcal{R}$ be the element of the Markov partition containing $l$. Take the Lebesgue measure on $\mathcal{R}$, normalize it, project it onto $l$, and push the resulting measure forward under $F$ onto $\overline{\gamma_{(1,0)}^*}$. The resulting probability measure on $\overline{\gamma_{(1,0)}^*}$, denoted by $d\widetilde{\mathcal{N}}_0$, corresponds to the density of states measure for the pure Hamiltonian, which we denote by $d\mathcal{N}_0$, under the identification
\begin{align}\label{eq:measure-ident}
\gamma_{(1,0)}: E\mapsto\left(\frac{E}{2}, \frac{E}{2}, 1\right).
\end{align}
Now, let $\set{\rho_V^1, \rho_V^2} = S_V\cap\mathrm{Per}_2(f)$. Observe that $\rho_V^1 = \rho_V^2$ if and only if $V = 0$. For $i = 1, 2$ and $V > 0$, $\rho_V^i$ is a hyperbolic fixed point for $f_V^2$ on $S_V$. The stable manifolds to $\rho_V^i$, $\set{W^s(\rho_V^1)}_{V > 0}$ and $\set{W^s(\rho_V^2)}_{V > 0}$, foliate two two-dimensional injectively immersed submanifolds of $\R^3$ that connect smoothly along $W^\mathrm{ss}(P_1)$ to form $W^\mathrm{cs}(P_1)$ (see \cite[Theorem B]{Pugh1997} for details).

Now fix $(\mf{p},\mf{q})\neq (1,0)$. Define a probability measure $\mu$ on $\gamma_{(\mf{p},\mf{q})}^*$ as follows. Let $(\beta_1(t),\beta_2(t))$ be a smooth regular curve in $\R^2$ with $(\beta_1(0),\beta_2(0)) = (1,0)$, $(\beta_1(1),\beta_2(1)) = (\mf{p},\mf{q})$. Denote by $W$ the smooth two-dimensional submanifold of $\R^3$ given by 
\begin{align*}
W := \bigcup_{t\in[0,1]}\gamma_{(\beta_1(t),\beta_2(t))}^*.
\end{align*}
For $t\in[0,1]$, even if $\gamma_{(\beta_1(t),\beta_2(t))}^*$ intersects $W^\mathrm{cs}(P_1)$ tangentially (at finitely many points), this intersection cannot be quadratic (this would produce an isolated point), nor can an intersection contain connected components (since the set of intersections is a Cantor set). It follows that $W\cap W^\mathrm{cs}(P_1)$ consists of uncountably many smooth regular curves, each with one endpoint in $\gamma_{(1,0)}^*$, and the other in $\gamma_{(\mf{p},\mf{q})}^*$. Hence a holonomy map from $\gamma_{(\mf{p},\mf{q})}^*\cap W^\mathrm{cs}(P_1)$ to $\gamma_{(1,0)}\cap W^\mathrm{ss}(P_1)$, given by projection along these curves (this map is not one-to-one), is well-defined; call this map $\mathcal{H}$. Now, with $E_0, E_1\in \gamma_{(\mf{p},\mf{q})}^*\cap W^{cs}(P_1)$, let the interval bounded by $E_0, E_1$ carry the same weight under $\mu$ as the interval bounded by $\mathcal{H}(E_0)$ and $\mathcal{H}(E_1)$ carries under $d\mathcal{N}_0$. This defines $\mu$ on intervals with endpoints in a dense subset, and hence completely determines $\mu$. 
\begin{claim}\label{claim:prop-help1}
The measure $d\mathcal{N}_{(\mf{p},\mf{q})}$ corresponds to the measure $\mu$ under the identification \eqref{eq:measure-ident}.
\end{claim}
\begin{proof}[Proof of Claim \ref{claim:prop-help1}]
Take two distinct points $E_0, E_1\in \gamma^{-1}_{(1,0)}(\overline{\gamma_{(1,0)}^*}\cap W^\mathrm{ss}(P_1))$. As soon as the parameters $\mf{p}, \mf{q}$ are turned on, a gap opens at the points $E_0, E_1$. Let $I$ be the interval bounded by $E_0$ and $E_1$, and $I_{(\mf{p},\mf{q})}$ the interval bounded by the two gaps. Then $d\mathcal{N}_{(\mf{p},\mf{q})}(I_{(\mf{p},\mf{q})}) = d\mathcal{N}_0(I)$. On the other hand, $d\mathcal{N}_0(I)$ is, modulo \eqref{eq:measure-ident}, just $d\widetilde{\mathcal{N}}_0(\gamma_{(1,0)}(I))$, which is the same as $\mu(\gamma_{(\mf{p},\mf{q})}(I_{(\mf{p},\mf{q})}))$.
\end{proof}
Let us now concentrate on $\mu$ along $\overline{\gamma_{(\mf{p},\mf{q})}^*}$. Let $\Gamma$ denote the intersection of $\gamma_{(\mf{p},\mf{q})}^*$ with the center-stable manifolds, excluding points of tangential intersection and those corresponding to the extreme boundary points of the spectrum.

Say $m\in\Gamma\cap S_{V_m}$, $V_m > 0$. With the notation from Lemma \ref{lem:holder}, let $\tau^*_m$ be a compact arc along $\tau_{V_m}^*$ containing $m$ in its interior and short enough such that the holonomy map $h$ restricted to $\tau^*_m$ is H\"older with exponent $\alpha$, as in Lemma \ref{lem:holder}. We may assume that the endpoints of $\tau_m$ lie on the center-stable manifolds. A slight modification of results in \cite{Damanik2011} gives
\begin{lem}\label{lem:dg2011}
There exists a measure $\mu_m$ defined on $\tau_m^*$, whose topological support is the intersection of $\tau_m^*$ with the center-stable manifolds, with the following properties. If $E_0, E_1$ are distinct points in $\tau_m^*\cap W^\mathrm{cs}(P_1)$ which are not boundary points of the same gap, and if $\widetilde{E}_0, \widetilde{E}_1\in\overline{\gamma_{(1,0)}^*}$ such that $E_i$ is a boundary point of the gap that opens at $\widetilde{E}_i$, then the interval bounded by $E_0, E_1$ carries the same weight under $\mu_m$ as does the interval bounded by $\widetilde{E}_0,\widetilde{E}_1$ under $d\widetilde{N}_0$. Moreover, for $\mu_m$-almost every $x\in\tau_m^*$, we have
\begin{align}\label{eq:lem-dg2011-1}
\lim_{\epsilon\downarrow 0}\frac{\log\mu_m B_{\tau_m^*, \epsilon}(x)}{\log\epsilon} = d(m)\in \R,
\end{align}
with
\begin{align}\label{eq:lem-dg2011-2}
0 < \inf_{m\in\Gamma}\set{d(m)}\text{,\hspace{4mm}}\sup_{m\in\Gamma}\set{d(m)}<\infty.
\end{align}
Moreover,
\begin{align}\label{eq:lem-dg2011-3}
\lim_{(\mf{p},\mf{q})\rightarrow(1,0)}\inf_{m\in\Gamma}\set{d(m)}= \lim_{(\mf{p},\mf{q})\rightarrow(1,0)}\sup_{m\in\Gamma}\set{d(m)} = 1.
\end{align}
Here $B_{\tau_m^*, \epsilon}(x)$ denotes $\epsilon$-ball around $x$ along $\tau_m^*$.
\end{lem}
As an immediate consequence, if $E_0, E_1\in\tau_m^*$ in the domain of $h$, then the interval bounded by $E_0, E_1$ carries the same weight under $\mu_m$ as does the interval bounded by $h(E_0), h(E_1)$ under $\mu$. As a consequence of \eqref{eq:lem-dg2011-1} and \eqref{eq:lem-dg2011-2} together with $\alpha$-H\"older continuity of $h$, we have the following. For $\mu_m$-almost every $x\in \tau_m^*$ in the domain of $h$,
\begin{align}\label{eq:prop-exactness}
\alpha d(m)\leq \liminf_{\epsilon\downarrow 0}\frac{\log\mu B_{\overline{\gamma^*}, \epsilon}(h(x))}{\log\epsilon}\leq \limsup_{\epsilon\downarrow 0}\frac{\log\mu B_{\overline{\gamma^*}, \epsilon}(h(x))}{\log\epsilon}\leq \frac{1}{\alpha}d(m),
\end{align}
which implies
\begin{align*}
\limsup_{\epsilon\downarrow 0}\frac{\log\mu B_{\overline{\gamma^*}, \epsilon}(h(x))}{\log\epsilon} - \liminf_{\epsilon\downarrow 0}\frac{\log\mu B_{\overline{\gamma^*}, \epsilon}(h(x))}{\log\epsilon}\leq \left(\frac{1}{\alpha}-\alpha\right)\sup_{m\in\Gamma}\set{d(m)}.
\end{align*}
Now choose $\alpha\in(0,1)$ such that
\begin{align*}
\left(\frac{1}{\alpha} - \alpha\right)< \frac{1}{n\sup_{m\in\Gamma}{d(m)}}.
\end{align*}
Let $\mf{V}_m$ be the subset of $\tau_m^*$ of full $\mu_m$-measure for which the conclusion of Lemma \ref{lem:dg2011} holds, and set $U_n = \bigcup_{m\in\Gamma}h(\mf{V}_m)$. Finally, apply Claim \ref{claim:prop-help1}.
\end{proof}
That the limit in \eqref{eq:thm_main2-1} is strictly positive follows from \eqref{eq:lem-dg2011-2}, and \eqref{eq:thm_main2-2} follows from \eqref{eq:lem-dg2011-3}.  It remains to prove \eqref{eq:thm_main2-3}.

From \cite{Damanik2011} we have that $d(m) < \frac{1}{2}\hdim(\Omega_{V_m})$, where $\Omega_{V_m}$ is the non-wandering set for $f_{V_m}$ on $S_{V_m}$. On the other hand, we have $\lhdim(\Sigma_{(\mf{p},\mf{q})}, m) = \frac{1}{2}\hdim(\Omega_{V_m})$. Also, from \cite{Damanik2011} we know that $d(m)$ depends continuously on $m$ (in fact it is the restriction to $\Gamma$ of a smooth function), so there is $\delta > 0$ such that for all $m\in\Gamma$, $\lhdim(\Sigma_{(\mf{p},\mf{q})},m)\geq d(m) + \delta$. Thus combined with \eqref{eq:prop-exactness} we have
\begin{align*}
\limsup_{\epsilon\downarrow 0}\frac{\log\mu B_{\overline{\gamma^*}, \epsilon}(h(x))}{\log\epsilon} \leq \frac{1}{\alpha}(\lhdim(\Sigma_{(\mf{p},\mf{q})}, m) - \delta).
\end{align*}
On the other hand local Hausdorff dimension is a continuous function over the spectrum, hence, assuming $x$ and $m$ are sufficiently close (that is, assuming $x\in\tau_m^*$ with $\tau_m^*$ sufficiently short), we have
\begin{align*}
\limsup_{\epsilon\downarrow 0}\frac{\log\mu B_{\overline{\gamma^*}, \epsilon}(h(x))}{\log\epsilon} \leq \frac{1}{\alpha}\left(\lhdim(\Sigma_{(\mf{p},\mf{q})}, h(x))-\frac{\delta}{2}\right).
\end{align*}
We can take $\alpha$ arbitrarily close to one. Now \eqref{eq:thm_main2-3} follows.

\section{Concluding remarks and open problems}\label{conclusion}

We believe that Theorem \ref{thm_main_result1} holds in greater generality. Namely, we believe that $\overline{\gamma^*_{(\mf{p},\mf{q})}}$ intersects the center-stable manifolds transversally for all $(\mf{p},\mf{q})\neq (1,0)$, $\mf{p}\neq 0$. This would allow one to extend many results that are currently known for the diagonal and the off-diagonal operators (e.g. \cites{Damanik2010,Damanik2011}). We should mention, however, that even in those two cases, transversality isn't known for all values of $\mf{q}$ and $\mf{p}$, respectively (compare \cites{Casdagli1986, Damanik2009}).

\begin{conj} With the notation as above, for all $(\mf{p},\mf{q})\neq (1,0)$, $\mf{p}\neq 0$, $\overline{\gamma^*_{(\mf{p},\mf{q})}}$ intersects the center-stable manifolds transversally.
\end{conj}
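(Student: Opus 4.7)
The plan is to extend the cone-field transversality argument of Proposition \ref{prop:trans-1} to the full parameter region $\{(\mf{p},\mf{q}):\mf{p}\neq 0\}\setminus\{(1,0)\}$. The first step is to pin down the \emph{bad set}: for each $(\mf{p},\mf{q})$, compactness of $\overline{\gamma^*_{(\mf{p},\mf{q})}}$ combined with $C^1$-dependence of the center-stable manifolds on their base points (\cite[Proposition 3.9]{Yessen2011}) implies that tangential intersections of $\gamma^*$ with the family $\mathcal{W}^\mathrm{s}$ form a closed subset of $\overline{\gamma^*}$; since such tangencies are known to be isolated along $\overline{\gamma^*}$ (by the Bedford--Smillie argument invoked in Section \ref{proof_i}), there can only be finitely many. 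Setting
\begin{align*}
\mf{B}=\{(\mf{p},\mf{q})\in\R^2 : \gamma_{(\mf{p},\mf{q})}^*\text{ has a tangential intersection with some }W^\mathrm{cs}\in\mathcal{W}^\mathrm{s}\},
\end{align*}
the conjecture is equivalent to $\mf{B}\subseteq\{(1,0)\}\cup\{\mf{p}=0\}$.

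Next I would exploit analyticity. The map $(\mf{p},\mf{q},E)\mapsto\gamma_{(\mf{p},\mf{q})}(E)$ is polynomial, and the center-stable manifolds are real-analytic (each $W^\mathrm{cs}$ is analytic and depends analytically on its base orbit via \cite[Theorem 1.4]{Llave1986}), so the tangency condition is locally cut out by a real-analytic equation in $(\mf{p},\mf{q},E)$. Propositions \ref{prop:trans-1} and \ref{prop:trans-2} already show that $\mf{B}$ has empty interior near $(1,0)$ and in the asymptotic regimes $\abs{\mf{p}},\abs{\mf{q}}\to\infty$, giving the conjecture on an open subset that is plausibly dense. The substantive task is to eliminate the remaining lower-dimensional pieces of $\mf{B}$.

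My proposed attack on this lower-dimensional bad set is a global cone-field construction along $\overline{\gamma^*_{(\mf{p},\mf{q})}}$. For each $V>0$ the non-wandering set $\Omega_V\subset S_V$ is hyperbolic and carries a natural unstable cone field; the aim is to assemble these fiberwise into a three-dimensional cone field $K_V^\eta$ in the spirit of \eqref{eq:ptrans-2}, but without insisting that $V$ be small, with width $\eta(V)$ tuned so that the normal-to-$S_V$ component of $\gamma'$ (controlled by \eqref{eq:ptrans-3}) is dominated by the unstable expansion rate of $Df^k$. Transversality of $\gamma^*$ would then reduce, exactly as in Proposition \ref{prop:trans-1}, to a bound of the form
\begin{align*}
\frac{\norm{\proj{(T_{f^k(\gamma(E))}S_{V(E)})^\perp}{\gamma'(E)}}}{\norm{Df^k(\gamma'(E))}}\leq \eta(V(E)),
\end{align*}
which is amenable to explicit estimate in local charts around $\Omega_V$.

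The main obstacle will be the genuinely intermediate regime. Near $(1,0)$ the hyperbolicity of $f$ is inherited from the Anosov factor on $\T^2$, which supplies the cone field \eqref{eq:ptrans-2} essentially for free; for large $\abs{\mf{p}}$ or $\abs{\mf{q}}$, Casdagli-style horseshoe constructions take over. For intermediate $V$ there is neither an explicit Anosov model nor a small parameter, and uniform distortion bounds for $Df^k$ along returns to a neighborhood of $\Omega_V$ (the analogue of Proposition \ref{prop:clen}, whose present proof relies critically on the conic-singularity coordinates near $P_1$) are not currently available. A convincing proof therefore seems to demand either a new global hyperbolicity/distortion estimate for the Fibonacci trace map on $\bigcup_{V>0}S_V$, or a perturbation/monodromy argument showing that tangencies at isolated points of $\mf{B}$ cannot persist while respecting the analytic Fricke--Vogt foliation; producing either of these is, in my estimation, where the real work lies.
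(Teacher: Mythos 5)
This statement is a \emph{conjecture} in the paper, announced in Section \ref{conclusion} as an open problem; the paper supplies no proof, and indeed explicitly notes that transversality is not known even for the pure diagonal and off-diagonal models for all parameter values (compare \cites{Casdagli1986, Damanik2009}). You are commendably honest about this: your final paragraph correctly identifies the genuinely hard part and declines to claim a proof. So there is nothing in the paper to ``match'' your attempt against; what I can do is assess whether your plan is internally coherent and whether its identified gap is the right one.

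Two of your preliminary steps are weaker than you present them. First, the Bedford--Smillie argument from Section \ref{proof_i} shows that tangencies along a \emph{fixed} $\overline{\gamma^*_{(\mf{p},\mf{q})}}$ are isolated, hence finitely many. This is exactly what the paper needs for the continuity statement in Theorem \ref{thm_main_result1}-i (one excludes finitely many points), but it gives no leverage on the conjecture, which demands that the tangency set be \emph{empty}, not merely finite. Second, the analyticity step is not as clean as you state. The collection $\mathcal{W}^\mathrm{s}$ is a Cantor lamination: for a single leaf $W^\mathrm{cs}$ the tangency locus in $(\mf{p},\mf{q},E)$ is indeed cut out by an analytic equation (granting the analyticity of individual leaves, for which the paper only gestures at \cite[Theorem 1.4]{Llave1986}, proved there for Anosov systems), but the full bad set $\mf{B}$ is the union over uncountably many leaves, and uncountable unions of analytic varieties carry no a priori dimension bound. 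So your inference that $\mf{B}$ is ``plausibly'' a lower-dimensional set, with only a residual piece left to kill, is not justified by the openness statements in Propositions \ref{prop:trans-1} and \ref{prop:trans-2} (which, by the way, prove $\mf{B}$ is actually \emph{empty}, not merely of empty interior, in their respective regimes).

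That said, your diagnosis of the core obstruction --- the absence of usable hyperbolicity and distortion estimates in the intermediate-$V$ regime, where neither the near-Anosov cone field \eqref{eq:ptrans-2} nor the large-parameter Casdagli horseshoe picture applies, and where the singular coordinates of Proposition \ref{prop:clen} are unavailable --- is precisely correct and is, I believe, the same obstruction the author of the paper is implicitly signalling by leaving this as a conjecture. Your proposed global cone-field $K_V^\eta$ with $V$-dependent width $\eta(V)$ tuned against \eqref{eq:ptrans-3} is the natural thing to try, and it is a genuine extension of the paper's method rather than a different strategy. What you do not supply, and what the field currently lacks, is the uniform expansion estimate that would make the cone field invariant for all $V>0$. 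In short: your plan is a sensible research program, correctly located relative to the known results, but it contains no new idea that closes the conjecture, and your paragraph-one and paragraph-two reductions do less work than they appear to.
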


We also note that, unlike in the diagonal and the off-diagonal cases, there are parameters $(\mf{p},\mf{q})$ for which the spectrum of the corresponding tridiagonal operator has full Hausdorff dimension, contrary to what one would expect from previous results.

Another particularly curious problem is analyticity of the Hausdorff dimension. We believe this to be true:

\begin{conj}\label{conj:anal} If $\alpha(t) = (\mf{p}(t),\mf{q}(t))$ is an analytic curve in $\R^2\setminus{(1,0)}$ and $\mf{p}(t)\neq 0$ for all $t$, then $\hdim(\Sigma_{\alpha(t)})$ is analytic as a function of $t$.
\end{conj}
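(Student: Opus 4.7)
The strategy is to combine the local dimension identity
\[
\lhdim(\Sigma_{(\mf{p},\mf{q})}, E) = \tfrac{1}{2}\hdim(\Omega_{V(E)}), \qquad V(E) := I \circ \gamma_{(\mf{p},\mf{q})}(E),
\]
established in the proof of Theorem \ref{thm_main_result1}(i) at points of transversal intersection (valid on all of $\Sigma_{(\mf{p},\mf{q})}$ modulo the preceding transversality conjecture), with the analyticity of $V \mapsto \hdim(\Omega_V)$ on $V > 0$. Along an analytic curve $\alpha(t) = (\mf{p}(t), \mf{q}(t))$, the map $V_t(E) := I \circ \gamma_{\alpha(t)}(E)$ is affine in $E$ with coefficients analytic in $t$ (see \eqref{eq_fv_value}), so $(t, E) \mapsto \lhdim(\Sigma_{\alpha(t)}, E)$ is jointly real-analytic on $\{V_t(E) > 0\}$.

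Next I would reduce the problem to a variational one. Since $V_t$ is a strictly monotone affine homeomorphism of $\Sigma_{\alpha(t)}$ onto its image -- a Cantor subset of $[V_-(t), V_+(t)]$ whose endpoints $V_\pm(t) = V_t(E_\pm(t))$ are the $V$-images of the two extreme spectral boundary points $E_\pm(t)$ -- we have
\[
\hdim(\Sigma_{\alpha(t)}) = \tfrac{1}{2}\sup\{\hdim(\Omega_V) : V \in V_t(\Sigma_{\alpha(t)})\}.
\]
The boundary points $E_\pm(t)$ admit a dynamical characterization: they are the outermost intersections of $\gamma_{\alpha(t)}^*$ with the center-stable manifolds of persistent invariant sets of $f$ (for $(\mf{p},\mf{q})$ near $(1,0)$, the normally hyperbolic curve $\mathrm{Per}_2(f)$ through $P_1$, as exploited in the proof of Theorem \ref{thm_main1}; for other regimes, outermost hyperbolic periodic orbits of $f_V$), and so depend analytically on $t$ by the implicit function theorem, using $C^\omega$ persistence of these invariant objects plus transversality. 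If $V \mapsto \hdim(\Omega_V)$ is strictly monotone on the relevant range (say decreasing, consistent with $\hdim(\Omega_V) \to 2$ as $V \to 0^+$), the supremum is attained at $V_-(t)$, giving $\hdim(\Sigma_{\alpha(t)}) = \tfrac{1}{2}\hdim(\Omega_{V_-(t)})$, analytic by composition. In general, the supremum is attained at an endpoint or at an interior critical point of $V \mapsto \hdim(\Omega_V)$; each candidate moves analytically in $t$ (critical points via the implicit function theorem applied to $\partial_V \hdim(\Omega_V) = 0$, assuming non-degeneracy), so global analyticity reduces to uniqueness of the maximizing candidate.

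The main obstacle is precisely this uniqueness. Strict monotonicity of $V \mapsto \hdim(\Omega_V)$ on $(0,\infty)$ does not appear to be established for the Fibonacci trace map, and without it one must rule out tie-breaking transitions at isolated $t$ where two analytic branches achieve the same value, which would produce non-analytic corners. The right tool is likely the Bowen formula computing $\hdim(\Omega_V)$ as the unique root of the topological pressure of the unstable log-Jacobian on $\Omega_V$; analyticity of pressure under $C^\omega$ perturbation (Ruelle) together with concavity/convexity properties in $V$ should constrain $V \mapsto \hdim(\Omega_V)$ sharply enough to preclude competing maxima on the range $[V_-(t), V_+(t)]$. A secondary, milder difficulty is that absent the transversality conjecture the local dimension identity fails at finitely many tangential intersections; continuity of local dimension implies these do not affect the supremum, but a careful accounting is needed.
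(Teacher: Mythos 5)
Note first that the paper does not prove Conjecture~\ref{conj:anal}: it is stated as a conjecture, and the text only supplies the implication ``Conjecture~\ref{conj:mon} $\Rightarrow$ Conjecture~\ref{conj:anal}''. Your core argument is exactly this conditional reduction and matches the paper's: grant monotonicity of $V\mapsto\hdim(\Omega_V)$, note that the $V$-value of the relevant extreme spectral endpoint depends analytically on $t$, invoke the local-dimension identity $\lhdim(\Sigma_{\alpha(t)},E)=\tfrac{1}{2}\hdim(\Omega_{V_t(E)})$ to conclude the global dimension equals $\tfrac{1}{2}\hdim(\Omega_{V_-(t)})$, and compose with the (Ruelle-type) analyticity of $V\mapsto\hdim(\Omega_V)$. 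You also correctly identify monotonicity as the genuine obstruction, which is precisely what the paper isolates as Conjecture~\ref{conj:mon}.

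Where you go beyond the paper --- trying to handle the non-monotone case via ``endpoints or interior critical points'' --- there is a gap. The supremum you write down is over $V\in V_t(\Sigma_{\alpha(t)})$, and this set is a Cantor set, not the interval $[V_-(t),V_+(t)]$. A continuous (even analytic) function on an interval attains its max on a Cantor subset at a point that need be neither an endpoint nor an interior critical point of the function on the interval; the maximizer is a point of the Cantor set, which itself deforms with $t$ in a way not controlled by the implicit function theorem applied to $\partial_V\hdim(\Omega_V)=0$. So without monotonicity (or some other structural input tying the max to an endpoint), the ``each candidate moves analytically'' step does not close. A smaller quibble: $V_t$ is not strictly monotone in $E$ in general --- by \eqref{eq:fvi-deriv} it is constant exactly when $\mf{q}=0$ or $\mf{p}=1$, i.e.\ at the diagonal and off-diagonal parameters --- though this does not hurt the argument since there the composition is trivially constant in $E$. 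In summary, your conditional reduction reproduces the paper's derivation faithfully; the extra machinery you sketch for the general case is not yet a proof and, as you honestly note, would require monotonicity (or a substitute for it) to be established.
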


In fact, this ties in with the monotonicity problem for the diagonal (and similarly the off-diagonal) model:

\begin{conj}\label{conj:mon} The Hausdorff dimension of the spectrum of the diagonal operator is a monotone-decreasing function of $\mf{q}\in [0,\infty)$.
\end{conj}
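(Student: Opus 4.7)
The plan is to reduce Conjecture \ref{conj:mon} to a monotonicity statement about the hyperbolic horseshoes $\Omega_V$ of the Fibonacci trace map, and then to attack the latter through thermodynamic formalism. Specializing \eqref{eq_fv_value} to $\mf{p} = 1$ yields the $\lambda$-independent value $I\circ\gamma_{(1,\mf{q})}(\lambda) = \mf{q}^2/4$, so $\overline{\gamma_{(1,\mf{q})}^*}$ lies entirely on the single invariant surface $S_{V(\mf{q})}$ with $V(\mf{q}) := \mf{q}^2/4$. Combining this with \eqref{eq_local_dim} and the observation (used in the proof of Theorem \ref{thm_main_result1}-i) that the local Hausdorff dimension is constant in the diagonal case, we obtain $\hdim(\Sigma_{(1,\mf{q})}) = \tfrac{1}{2}\hdim(\Omega_{V(\mf{q})})$. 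Since $\mf{q} \mapsto \mf{q}^2/4$ is strictly increasing on $[0,\infty)$, Conjecture \ref{conj:mon} becomes equivalent to the statement that $V \mapsto \hdim(\Omega_V)$ is monotone decreasing on $(0,\infty)$.

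For each $V > 0$, $\Omega_V$ is a locally maximal transitive hyperbolic set for the analytic surface diffeomorphism $f_V$, so Bowen's formula gives $\hdim(\Omega_V) = 2 t(V)$, where $t(V)$ is the unique root of
$$P_V(t) := P_{f_V|\Omega_V}\bigl(-t\,\varphi_V\bigr) = 0, \qquad \varphi_V(x) := \log\bigl\|Df_V|_{E^u(x)}\bigr\|.$$
Cantat's analyticity theorem (\cite[Theorem 5.23]{Cantat2009}) ensures that $t(V)$ is real-analytic. Implicit differentiation of $P_V(t(V)) = 0$, combined with the Walters--Ruelle formula for the derivative of topological pressure, yields
$$\mathrm{sign}\,t'(V) \;=\; -\,\mathrm{sign}\!\int_{\Omega_V} \partial_V \varphi_V\, d\mu_V,$$
where $\mu_V$ is the equilibrium state of $-t(V)\,\varphi_V$ on $\Omega_V$. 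Thus Conjecture \ref{conj:mon} reduces to the averaged inequality
$$\int_{\Omega_V} \partial_V \log\bigl\|Df_V|_{E^u}\bigr\|\, d\mu_V \;>\; 0 \qquad \text{for all } V > 0.$$

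To attack this, I would fix a Markov partition for $\Omega_V$ depending continuously on $V$ (extending the Casdagli--Damanik partition of Figure \ref{fig:Casdagli-Markov} via persistence of hyperbolicity), compute $\partial_V\varphi_V$ as an explicit algebraic function using the recursive transfer-matrix structure behind \eqref{eq_trans_mat}, and try to establish the pointwise inequality $\partial_V\varphi_V > 0$ on $\Omega_V$, which would trivially imply the averaged one. For small $V$ this should be accessible perturbatively around $V = 0$, where $f|_{\mathbb{S}}$ is a factor of the Anosov automorphism of $\T^2$ from \eqref{eq:ptrans-1} and $\mu_V$ is close to the push-forward of Haar measure; the reversible symmetry $\sigma\circ f^3\circ\sigma = f^3$ from the proof of Proposition \ref{prop:trans-1} simplifies the leading-order computation and should give $t'(V) < 0$ as $V \to 0^+$. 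For large $V$ one enters the Casdagli regime \cite{Casdagli1986}, where uniform expansion estimates are available.

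The main obstacle --- and the reason the analogous statement is open even for the classical diagonal Fibonacci Hamiltonian --- is the intermediate $V$ range. Pointwise monotonicity of $\varphi_V$ in $V$ cannot be expected on all of $\Omega_V$, so one must simultaneously track the variation of $\varphi_V$ and the equilibrium measure $\mu_V$, and no convexity/concavity property of $V \mapsto P_V(t)$ strong enough to control the sign of $t'(V)$ uniformly appears to be known. A plausible fallback is to use analyticity of $t(V)$ to reduce strict monotonicity on $(0,\infty)$ to the non-vanishing of $t'(V)$ on the interior, combine the perturbative $V \to 0^+$ computation above with the Casdagli asymptotics to pin down the sign at both ends, and then attempt to rule out interior sign changes via a rigidity or uniqueness argument for the equilibrium state; establishing such a rigidity statement is where the genuine difficulty lies.
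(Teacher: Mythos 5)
Conjecture \ref{conj:mon} is stated as an open problem and is not proved in the paper; the surrounding discussion only notes that it (together with Conjecture \ref{conj:anal}) would follow from monotone decrease of $V\mapsto\hdim(\Omega_V)$. Your reduction reproduces exactly that observation and is correct: specializing \eqref{eq_fv_value} to $\mf{p}=1$ gives $I\circ\gamma_{(1,\mf{q})}\equiv\mf{q}^2/4$ independently of $\lambda$, so $\overline{\gamma_{(1,\mf{q})}^*}$ lies on the single surface $S_{\mf{q}^2/4}$; since the local Hausdorff dimension is constant along the spectrum in the diagonal case, \eqref{eq_local_dim} yields $\hdim(\Sigma_{(1,\mf{q})}) = \tfrac12\hdim(\Omega_{\mf{q}^2/4})$, and strict monotonicity of $\mf{q}\mapsto\mf{q}^2/4$ on $[0,\infty)$ makes the conjecture equivalent to monotone decrease of $V\mapsto\hdim(\Omega_V)$ on $(0,\infty)$. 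Up to this point your argument is sound and coincides with the paper's.

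What follows is, as you candidly acknowledge, a program rather than a proof, and the program does not close the gap. Two remarks. First, a technical one: the family $(\Omega_V, f_V, \varphi_V)$ varies with $V$ both through the potential and through the diffeomorphism and its hyperbolic set, so the formula $\partial_V P_V(t) = -t\int_{\Omega_V}\partial_V\varphi_V\,d\mu_V$ is not literally the Walters--Ruelle derivative; one must first fix a symbolic model (a $V$-family of Markov-partition conjugacies onto a single subshift, which exists by structural stability) and absorb the $V$-dependence of the dynamics into the transferred potential before differentiating the pressure. After that correction the sign formula you write down is the right object, but this is bookkeeping, not new leverage. Second, and decisively: the averaged inequality $\int\partial_V\varphi_V\,d\mu_V>0$ is exactly the open content. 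Your perturbative remarks near $V=0^+$ and the appeal to Casdagli's large-$V$ regime are consistent with known asymptotics, but they do not control the sign on the intermediate range, and the ``rigidity or uniqueness argument'' you invoke as a fallback is not supplied. So the proposal correctly locates the difficulty and reduces the conjecture in the same way the paper does, but it does not resolve it --- which is to be expected, since the statement remains open.
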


Let us show how the claim of Conjecture \ref{conj:anal} follows from the conclusion of Conjecture \ref{conj:mon}. 

Take $\alpha$ as in the statement of Conjecture \ref{conj:anal}. Let $V(t)$ be such that for the lower endpoint of the spectrum $\Sigma_{\alpha(t)}$, which we denote by $l(t)$, we have $\gamma(l(t))\in S_{V(t)}$, where $\gamma$ is the curve that was defined in \eqref{gamma_defn}. Clearly $V(t)$ is analytic. On the other hand, monotonicity of $\mf{q}\mapsto \hdim(\Sigma_{(1,\mf{q})})$ implies monotonicity of $V\mapsto \hdim(\Omega_V)$ (see \eqref{eq_local_dim}). Thus by monotonicity, we have:
\begin{align*}
\hdim(\Sigma_{\alpha(t)}) = \lhdim(\Sigma_{\alpha(t)}, l(t)) = \frac{1}{2}\hdim(\Omega_{V(t)}).
\end{align*}
Since $t\mapsto V(t)$ is analytic, and $V\mapsto\hdim(\Omega_V)$ is also analytic, analyticity of $\hdim(\Sigma_{\alpha(t)})$ follows. 

We should also that strict  upper and lower bounds on the Hausdorff dimension of $\Omega_V$, as a function of $V$, have been given in \cite{Damanik2010} for all $V$ sufficiently close to zero.

Evidently both conjectures would follow from monotonicity of $V\mapsto\hdim(\Omega_V)$.

\section*{Acknowledgement}

I wish to thank my thesis advisor, Anton Gorodetski, for the invaluable support and guidance.

I also wish to thank Svetlana Jitomirskaya, David Damanik and Christoph Marx for useful discussions.

\bibliographystyle{plain}
\bibliography{../../../bibliography}
\end{document}